\documentclass[12pt, reqno]{amsart}
\usepackage{amsmath, amsthm, amscd, amsfonts, amssymb, graphicx, color}
\usepackage[bookmarksnumbered, colorlinks, plainpages]{hyperref}
\hypersetup{colorlinks=true,linkcolor=red, anchorcolor=green, citecolor=cyan, urlcolor=red, filecolor=magenta, pdftoolbar=true}

\textheight 22.5truecm \textwidth 14.5truecm
\setlength{\oddsidemargin}{0.35in}\setlength{\evensidemargin}{0.35in}

\setlength{\topmargin}{-.5cm}
\newtheorem{pro}{Proposition}[section]
 \newtheorem{cor}[pro]{Corollary}
 \newtheorem{lem}[pro]{Lemma}

  \newtheorem{defi}[pro]{Definition}
 \newtheorem{prop}[pro]{Proposition}
 \newtheorem{thm}[pro]{Theorem}

 \newcommand{\h}{\mathcal H}
   
  \newcommand{\hb}{\mathbb H}
  \newcommand{\ff}{\mathbf F}

 \numberwithin{equation}{section}

\begin{document}

\setcounter{page}{1}

\title[Inequalities-Equalities Concerning the continuous g-Fusion Frame ]{On Some Inequalities-Equalities Concerning the continuous generalized Fusion Frame in Hilbert spaces}

\author[NADIA ASSILA, SAMIR KABBAJ, Ouafaa Bouftouh  and Chaimae Mezzat ]{NADIA ASSILA$^{1*}$, SAMIR KABBAJ$^{2}$, Ouafaa Bouftouh$^{3}$\MakeLowercase{and} Chaimae Mezzat$^{4}$}

\address{$^{1}$Department of Mathematics, University of Ibn Tofail, B.P. 133, Kenitra, Morocco}
\email{\textcolor[rgb]{0.00,0.00,0.84}{samkabbaj@yahoo.fr}}

\address{$^{2}$Department of Mathematics, University of Ibn Tofail, B.P. 133, Kenitra, Morocco}
\email{\textcolor[rgb]{0.00,0.00,0.84}{nadiyassila@gmail.com}}

\address{$^{3}$Department of Mathematics, University of Ibn Tofail, B.P. 133, Kenitra, Morocco}
\email{\textcolor[rgb]{0.00,0.00,0.84}{bouftouh2012@gmail.com}}

\address{$^{3}$Department of Mathematics, University of Ibn Tofail, B.P. 133, Kenitra, Morocco}
\email{\textcolor[rgb]{0.00,0.00,0.84}{chaimae.mezzat@uit.ac.ma}}

\address{ Laboratoire des Equations aux Dérivées Partielles\\ Algèbre et Géométrie Spectrale.
  }

\subjclass[2010]{42C15, 42C40, 94A12 }

\keywords{continuous generalized Fusion Frame, Parceval continuous generalized fusion frame, tight continuous generalized fusion frame, resolution of identity.}
\date{02/10/2021; 
\newline \indent $^{*}$Corresponding author}

\begin{abstract}
continuous generalized fusion frame theory was recently introduced by Rahimi and al. Several equalities and inequalities have been obtained for frame, fusion generalized fusion frame, among others. In the present paper, we continue and extend these results to obtain  some important identities and inequalities in the case of  continuous generalized fusion frame, Parceval continuous generalized fusion frame, $ \lambda$-tight continuous generalized fusion frame. Moreover, we obtain some new inequalities for the alternate dual continuous generalized fusion frame. Finally, we obtain frame operator of a pair of Bessel continuous generalized fusion mapping and we derive some results about resolution of identity.
\end{abstract}
  \maketitle
\section{Introduction} 
Frames are among the most intensively studied and best understood of all classes of overcomplete basis: one can represent each element in the vector space via a frame.
In last few decades this notion has attracted much attention because of their practical applications in many areas such as coding and communications, filter bank theory, and widely used in signal and image processing, among others.
More recently, Sadri and al. are combined two types of frames which lead to a new concept in the theory of frames $\ll\, generalized \,fusion\, frame \,\gg$.\\

One of the most important inequality and useful identity  were found by Balan and al.(2006) as follows: 
\begin{thm}\label{th: Balan identitie}(Balan and al. 2007) Let $\{f_i\}_{i\in I}$ be a Parceval frame for $\h$. Then 
\begin{equation}\label{eq: Balan identitie}
\sum_{i\in J}\vert\langle f, f_i\rangle\vert^2-\left\Vert \sum_{i\in J}\langle f, f_i\rangle f_i\right\Vert^2\\=
\sum_{i\in J^c}\vert\langle f, f_i\rangle\vert^2- \left\Vert \sum_{i\in J^c}\langle f, f_i\rangle f_i\right\Vert^2.
\end{equation}
\end{thm}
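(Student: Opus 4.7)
The plan is to translate the identity into an operator identity and then exploit the algebraic fact that, for any self-adjoint operator $A$ on $\mathcal H$, the quantity $\langle Af,f\rangle-\|Af\|^2$ equals $\langle (I-A)f,f\rangle-\|(I-A)f\|^2$.

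First I would introduce, for each subset $J\subset I$, the partial frame operator $S_J f := \sum_{i\in J}\langle f,f_i\rangle f_i$. Because $\{f_i\}$ is Parseval, the full frame operator equals the identity, so $S_J+S_{J^c}=I$. Moreover $S_J$ is self-adjoint, which gives the crucial identification
\begin{equation*}
\sum_{i\in J}|\langle f,f_i\rangle|^2 \;=\; \langle S_J f,f\rangle,
\end{equation*}
and likewise for $J^c$. After this reduction, the claimed identity \eqref{eq: Balan identitie} becomes the purely operator-theoretic statement
\begin{equation*}
\langle S_J f,f\rangle - \|S_J f\|^2 \;=\; \langle (I-S_J)f,f\rangle - \|(I-S_J)f\|^2.
\end{equation*}

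Next I would verify this last equality by a direct expansion. Writing $A=S_J$ and using self-adjointness (so that $\langle Af,f\rangle\in\mathbb R$), one expands
\begin{equation*}
\|(I-A)f\|^2 \;=\; \|f\|^2-2\langle Af,f\rangle + \|Af\|^2,
\end{equation*}
whence the right-hand side simplifies to $\langle Af,f\rangle-\|Af\|^2$, matching the left-hand side. This finishes the proof.

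The main obstacle is essentially cosmetic: one must be careful that $\sum_{i\in J}\langle f,f_i\rangle f_i$ defines a bounded self-adjoint operator (which follows from Bessel's inequality applied to the subfamily $\{f_i\}_{i\in J}$) and that the series converges unconditionally so the splitting $S_J+S_{J^c}=I$ is legitimate. Once these convergence issues are handled, the remainder is the one-line algebraic identity above.
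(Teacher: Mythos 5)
Your proof is correct and follows essentially the same route the paper uses: the paper quotes this classical identity without proof, but its own proof of the generalization (Theorem \ref{th: theorem1}) rests on exactly your reduction, namely writing the sums as $\langle S_J f,f\rangle$ for the partial frame operators with $S_J+S_{J^c}=Id_{\h}$ and then applying the algebraic identity $\langle Af,f\rangle-\Vert Af\Vert^2=\langle (I-A)^*Af,f\rangle$ for self-adjoint $A$. The only difference is cosmetic: the paper factors $(I-S_J)^*S_J$ directly rather than expanding $\Vert (I-A)f\Vert^2$, but the computation is the same.
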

which is particulary used in the study of signal processing. Inspired by \ref{th: Balan identitie} in 2006 Zhu and Wu generalized this inequality to an alternate dual frame.
\begin{thm}\label{th: Zhu and Wu identitie}(Zhu and Wu 2006) Let $\{f_i\}_{i\in I}$ be a frame for $\h$ and  $\{g_i\}_{i\in I}$ is an alternate dual frame of $\{f_i\}_{i\in I}$. Then for any $J\in I$ and $f\in \h$ we have 
\begin{equation}\label{eq: Zhu and Wu identitie}
\left(\sum_{i\in J}\langle f, g_i\rangle \overline{\langle f,f_i\rangle}\right)-\left\Vert \sum_{i\in J}\langle f, g_i\rangle f_i\right\Vert^2\\=
\left(\sum_{i\in J^c}\langle f, g_i\rangle \overline{\langle f,f_i\rangle}\right)- \left\Vert \sum_{i\in J^c}\langle f, g_i\rangle f_i\right\Vert^2.
\end{equation}
\end{thm}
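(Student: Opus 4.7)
The plan is to translate the identity into a statement about a single bounded operator on $\h$ and then reduce it to an elementary algebraic identity. For a subset $J \subset I$, introduce the partial reconstruction operator
$$S_J : \h \lra \h, \qquad S_J f \;=\; \sum_{i\in J}\langle f, g_i\rangle f_i.$$
Since $\{g_i\}_{i\in I}$ is an alternate dual of $\{f_i\}_{i\in I}$, one has the reconstruction formula $f = \sum_{i\in I}\langle f, g_i\rangle f_i$ for all $f\in \h$. This immediately gives the operator identity $S_J + S_{J^c} = I_{\h}$, so $S_{J^c}f = f - S_J f$. A direct expansion also shows that
$$\sum_{i\in J}\langle f, g_i\rangle \overline{\langle f,f_i\rangle} \;=\; \Big\langle \sum_{i\in J}\langle f, g_i\rangle f_i,\, f\Big\rangle \;=\; \langle S_J f,\, f\rangle,$$
so the left-hand side of \eqref{eq: Zhu and Wu identitie} is $\langle S_J f, f\rangle - \|S_J f\|^2$ and the right-hand side is $\langle S_{J^c} f, f\rangle - \|S_{J^c} f\|^2$.

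With this reformulation the theorem reduces to proving the scalar identity
$$\langle S_J f, f\rangle - \|S_J f\|^2 \;=\; \langle S_{J^c} f, f\rangle - \|S_{J^c} f\|^2.$$
The plan to establish it is a direct computation: substitute $S_{J^c} f = f - S_J f$ into the right-hand side, expand $\|f - S_J f\|^2 = \|f\|^2 - \langle S_J f, f\rangle - \langle f, S_J f\rangle + \|S_J f\|^2$ and $\langle f - S_J f, f\rangle = \|f\|^2 - \langle S_J f, f\rangle$, and simplify. After cancellation the $\|f\|^2$ terms and the cross terms combine to yield precisely $\langle S_J f, f\rangle - \|S_J f\|^2$, which is the left-hand side. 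Conceptually this is the same mechanism that drives Balan's identity \eqref{eq: Balan identitie}: the difference $\|S_J f\|^2 - \|S_{J^c} f\|^2$ compensates the difference $\langle S_J f, f\rangle - \langle S_{J^c} f, f\rangle$.

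The main obstacle I anticipate is bookkeeping the Hermitian (non-self-adjoint) nature of $S_J$. Unlike the Parseval case in Theorem \ref{th: Balan identitie}, where $S_J$ is self-adjoint and $\langle S_J f, f\rangle$ is automatically real, here $\langle S_J f, f\rangle$ is in general complex, so one must be careful to track the terms $\langle S_J f, f\rangle$ and $\langle f, S_J f\rangle$ separately when expanding the squared norms, and to verify that the cross-terms combine in the right way to reproduce the complex quantity $\langle S_{J^c}f,f\rangle - \|S_{J^c}f\|^2$ rather than only its real part. Once this algebraic care is taken, the proof is essentially a one-line operator computation based on the alternate-dual reconstruction $S_J + S_{J^c} = I_{\h}$.
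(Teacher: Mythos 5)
Your operator reformulation is the right mechanism, and it is exactly the one the paper itself uses for its continuous $g$-fusion generalization (Theorem \ref{th: theorem1}); the paper states Theorem \ref{th: Zhu and Wu identitie} without proof, so that later argument is the only internal point of comparison. The gap is in your final cancellation, and it is precisely the non-self-adjointness issue you flagged and then waved away. Writing $S_Jf=\sum_{i\in J}\langle f,g_i\rangle f_i$ and substituting $S_{J^c}f=f-S_Jf$, one gets
\begin{align*}
\langle S_{J^c}f,f\rangle-\|S_{J^c}f\|^2
&=\bigl(\|f\|^2-\langle S_Jf,f\rangle\bigr)-\bigl(\|f\|^2-\langle S_Jf,f\rangle-\langle f,S_Jf\rangle+\|S_Jf\|^2\bigr)\\
&=\langle f,S_Jf\rangle-\|S_Jf\|^2
=\overline{\langle S_Jf,f\rangle}-\|S_Jf\|^2,
\end{align*}
whereas the left-hand side of \eqref{eq: Zhu and Wu identitie} is $\langle S_Jf,f\rangle-\|S_Jf\|^2$. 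These coincide only when $\langle S_Jf,f\rangle$ is real, which fails for a general alternate dual. The identity as displayed is in fact false: in $\h=\mathbb{C}$ take $f_1=f_2=1$, $g_1=\tfrac12+i$, $g_2=\tfrac12-i$ (so that $\sum_i\langle f,g_i\rangle f_i=f$ for all $f$), $J=\{1\}$ and $f=1$; the left side equals $-\tfrac34-i$ while the right side equals $-\tfrac34+i$.

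So the defect is not in your strategy but in the last line: the correct Zhu--Wu identity carries a complex conjugate on one of the two sides (equivalently, it is an identity between real parts), which is exactly how the paper writes its own analogue in Theorem \ref{th: theorem1}, where the integrand over $X_1^c$ appears under a conjugation bar; the bar has evidently been dropped in the statement of Theorem \ref{th: Zhu and Wu identitie}. Your computation, carried out honestly to the end, proves the corrected statement
$\overline{\langle S_Jf,f\rangle}-\|S_Jf\|^2=\langle S_{J^c}f,f\rangle-\|S_{J^c}f\|^2$;
as written, your proposal claims the cross terms ``combine to yield precisely $\langle S_Jf,f\rangle-\|S_Jf\|^2$,'' and that step would fail.
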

Later on, this inequality (\ref{eq: Balan identitie}) has motivated a large number of authors such as Guo and al.(2016) Zhang and Li (2016), for various other inequalities related to this inequality, we refer the readers to (\cite{operatoroperation1}, \cite{Blana06}, \cite{Balan15}, \cite{Gavruta06}, \cite{ahmadi20}, \cite{Lisun08}).

Motivated by the aforementioned works, we aim to  extend and improve identities and inequalities in the case of  continuous generalized fusion frame, Parceval continuous generalized fusion frame, $ \lambda$-tight continuous generalized fusion frame and continuous generalized fusion pairs.
\section{Preliminaries}
\subsection{Background}
Throughout this paper, we adopt the following notations: $\h$ will be a Hilbert space, $\mathcal{B}(\h)$ the algebra of all bounded linear operators on $\h$, $Id_{\h}$ the identity operator on $\h$, and $\hb$ the collection of all closed subspace of $\h$. Also, $(X, \mu)$ will be a measure space, and $\mu\,:\,\, X \longrightarrow [0,+\infty)$ will be a measurable mapping such that $\mu\neq 0\,\, a.e.$.
\begin{lem} \cite{operatoroperation2}\label{lem: operatoroperation1} Let $U\in\mathcal{\h}$ be a self-adjoint and $T=aU^2+bU+c Id_{\h}$ such that $a, b, c \in \mathbb{R}$, then the following statements hold:
\begin{itemize}
\item[i)] If $a>0$, then
$$
\inf _{\|f\|=1}\langle Tf, f\rangle \geq \frac{4 a c-b^{2}}{4 a} .
$$
\item[ii)] If $a<0$, then
$$
\sup _{\|f\|=1}\langle T f, f\rangle \leq \frac{4 a c-b^{2}}{4 a} .
$$
\end{itemize}
\end{lem}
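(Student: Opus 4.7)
The plan is to reduce both parts of the lemma to the positivity of $(U + \tfrac{b}{2a}I)^2$ by completing the square, exactly as one does for a real quadratic polynomial. Concretely, I would rewrite
\begin{equation*}
T = aU^2 + bU + c\,Id_{\h} = a\!\left(U + \tfrac{b}{2a}Id_{\h}\right)^{\!2} + \frac{4ac-b^2}{4a}\,Id_{\h},
\end{equation*}
which is a legitimate identity of bounded operators because $U$ commutes with $Id_{\h}$, so all the usual polynomial manipulations are valid. Here the assumption $a \ne 0$ is implicit in both parts, so the division by $4a$ is safe.

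Next I would exploit the self-adjointness of $U$. Since $U$ is self-adjoint, so is $V := U + \tfrac{b}{2a}Id_{\h}$, and hence $V^2$ is positive in the operator sense: for every $f \in \h$,
\begin{equation*}
\langle V^2 f, f\rangle = \langle Vf, Vf\rangle = \|Vf\|^2 \geq 0.
\end{equation*}
Taking the inner product of $Tf$ with $f$ for a unit vector $f$ then gives
\begin{equation*}
\langle Tf, f\rangle = a\,\|Vf\|^2 + \frac{4ac-b^2}{4a}.
\end{equation*}

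From here both parts are immediate. In case (i), $a>0$ forces $a\|Vf\|^2 \geq 0$, so $\langle Tf,f\rangle \geq \tfrac{4ac-b^2}{4a}$ for every unit $f$, and taking the infimum yields the desired bound. In case (ii), $a<0$ forces $a\|Vf\|^2 \leq 0$, so $\langle Tf,f\rangle \leq \tfrac{4ac-b^2}{4a}$ for every unit $f$, and taking the supremum gives the second inequality.

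There is really no serious obstacle: the only thing to be careful about is to justify that the completion of the square is a valid operator identity (it is, because $U$ and $Id_{\h}$ commute) and to observe that self-adjointness of $U$ is precisely what turns $V^2$ into a positive operator; without this, the sign of $\langle V^2 f, f\rangle$ would be unclear. The constants $a,b,c$ being \emph{real} is also essential, since otherwise $\tfrac{4ac-b^2}{4a}$ would not be a real number and the inequalities would be meaningless.
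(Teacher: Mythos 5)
Your proof is correct. Note that the paper itself gives no proof of this lemma: it is quoted from the cited reference (Najati--Rahimi), so there is no argument in the text to compare against. Your completing-the-square computation $T = a\bigl(U + \tfrac{b}{2a}Id_{\h}\bigr)^{2} + \tfrac{4ac-b^{2}}{4a}Id_{\h}$, combined with the positivity of the square of the self-adjoint operator $V = U + \tfrac{b}{2a}Id_{\h}$, is the standard and essentially the only natural proof of this statement, and your attention to the implicit hypotheses ($a\neq 0$, realness of $a,b,c$, boundedness of $U$) is exactly right.
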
   
\begin{lem} \cite{operatoroperation1}\label{lem: operatoroperation2}  If $T_1, T_2$ are operators on $\h$ satisfying $T_1+ T_2=Id_{\h}$, then $T_1+ T_2= T_1^2+ T_2^2.$
 \end{lem}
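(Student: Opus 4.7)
The plan is to obtain $T_1^2 + T_2^2 = T_1 + T_2$ directly from the hypothesis $T_1 + T_2 = Id_{\h}$ through elementary operator manipulations. First I would multiply the defining equation on the right by $T_1$ to get $T_1^2 + T_2 T_1 = T_1$, and likewise on the right by $T_2$ to get $T_1 T_2 + T_2^2 = T_2$. Summing these yields the intermediate identity
\begin{equation*}
T_1^2 + T_2^2 + (T_1 T_2 + T_2 T_1) = T_1 + T_2.
\end{equation*}

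At this stage, the conclusion of the lemma reduces to showing that the anticommutator $T_1 T_2 + T_2 T_1$ vanishes. An equivalent reformulation is obtained by substituting $T_2 = Id_{\h} - T_1$ into the claim $T_1^2 + T_2^2 = Id_{\h}$: expanding $T_2^2 = Id_{\h} - 2T_1 + T_1^2$ and simplifying reduces the desired identity to $T_1^2 = T_1$, so that $T_1$ (and symmetrically $T_2$) must be idempotent. I would then try to extract this idempotency by left-multiplying $T_1 + T_2 = Id_{\h}$ by $T_1$ and right-multiplying by $T_1$ and comparing, so as to expose $T_1 = T_1^2$ as a direct consequence.

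The main obstacle is exactly this cancellation step. Since the bare hypothesis $T_1 + T_2 = Id_{\h}$ does not on its own force $T_1 T_2 + T_2 T_1 = 0$, I anticipate that the completed proof must invoke an implicit structural assumption on $T_1, T_2$ --- for instance that they form a pair of complementary (orthogonal) projections, or equivalently that $T_1 T_2 = T_2 T_1 = 0$ --- drawn from the frame-operator context in which the lemma is later applied (pairs of projection-like operators arising as partial sums of a resolution of the identity). Once such a hypothesis is in place, each operator is forced to be idempotent and the stated identity $T_1 + T_2 = T_1^2 + T_2^2$ follows in a single line. My proof would therefore open by making that structural hypothesis explicit, then execute the two-line multiplication-and-sum argument above to close.
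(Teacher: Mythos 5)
You have correctly diagnosed that the statement is unprovable as written: your reduction is exact, since under the sole hypothesis $T_1+T_2=Id_{\h}$ the claim $T_1+T_2=T_1^2+T_2^2$ is equivalent to $T_1$ being idempotent, and this fails already for $T_1=2\,Id_{\h}$, $T_2=-Id_{\h}$. The gap is in your proposed repair. The lemma, as cited from Balan--Casazza--Edidin--Kutyniok, is not missing an orthogonality hypothesis; it is misstated by a sign. The intended identity is
\[
T_1-T_2=T_1^2-T_2^2,\qquad\text{equivalently}\qquad T_1+T_2^2=T_2+T_1^2,
\]
which holds for \emph{arbitrary} bounded operators with $T_1+T_2=Id_{\h}$ and is exactly the form the paper invokes later (the displayed difference of squares $\bigl(S_{\mathbf{F},\Lambda}^{-1}\mathcal{M}_{\mathbf{F},\Lambda}^{X_1}\bigr)^{2}-\bigl(S_{\mathbf{F},\Lambda}^{-1}\mathcal{M}_{\mathbf{F},\Lambda}^{X_1^c}\bigr)^{2}$ in the proof of the theorem on $\mathcal{M}_{\mathbf{F},\Lambda}^{X_1}$). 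Your own two-line computation already proves the corrected version: left-multiplying $T_1+T_2=Id_{\h}$ by $T_1$ gives $T_1^2+T_1T_2=T_1$, right-multiplying by $T_2$ gives $T_1T_2+T_2^2=T_2$, and \emph{subtracting} (rather than adding) cancels the cross term $T_1T_2$ with no commutativity or idempotency needed.

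The structural hypothesis you propose to import --- that $T_1,T_2$ are complementary orthogonal projections, or that $T_1T_2=T_2T_1=0$ --- would indeed make the stated identity true, but it would make the lemma useless for this paper. In every application here the two operators are partial frame operators over complementary subsets $X_1$ and $X_1^c$ (such as $S_{\mathbf{F},\Lambda}^{X_1}$ and $S_{\mathbf{F},\Lambda}^{X_1^c}$, or $S_{\mathbf{F},\Lambda}^{-1}\mathcal{M}_{\mathbf{F},\Lambda}^{X_1}$ and $S_{\mathbf{F},\Lambda}^{-1}\mathcal{M}_{\mathbf{F},\Lambda}^{X_1^c}$); these sum to the identity but are not projections, and their product is an integral of cross terms with no reason to vanish. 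So the assumption you would draw from the ``frame-operator context'' is precisely what fails in that context. The correct move is to fix the signs in the statement, not to strengthen the hypotheses.
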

\begin{lem}\cite{Ga07}\label{lem: Projection-operator}
Let $V \subseteq \mathcal{H}$ be a closed subspace, and $T$ be a linear bounded operator on
$\mathcal{H}$. Then
\begin{align}
\label{lem: pi-T}
\pi_{V} T^*=\pi_ V T^*\pi_{\overline{{T V}}}.
\end{align}
If $T$ is a unitary (i.e. $T^*T = T T^*=Id_{\mathcal{H}}$, then
\begin{align}
\label{lem: piTunitaire}
\pi_{\overline{{TV}}} T=T\pi_V .
\end{align} \end{lem}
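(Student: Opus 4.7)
The plan is to prove the two identities separately, each via a standard orthogonal decomposition of $\mathcal{H}$.

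For the first identity $\pi_V T^* = \pi_V T^* \pi_{\overline{TV}}$, I would start by establishing the characterization
\[
(\overline{TV})^\perp = \{\, h \in \mathcal{H} : T^* h \in V^\perp \,\}.
\]
Indeed, $h \perp \overline{TV}$ iff $\langle h, Tv\rangle = 0$ for every $v \in V$, iff $\langle T^* h, v\rangle = 0$ for every $v \in V$, iff $T^* h \in V^\perp$. Given $f \in \mathcal{H}$, write the orthogonal decomposition $f = \pi_{\overline{TV}} f + h$ with $h \in (\overline{TV})^\perp$. By the characterization above, $T^* h \in V^\perp$, hence $\pi_V T^* h = 0$. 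Applying $\pi_V T^*$ to the decomposition yields
\[
\pi_V T^* f = \pi_V T^* \pi_{\overline{TV}} f + \pi_V T^* h = \pi_V T^* \pi_{\overline{TV}} f,
\]
which proves \eqref{lem: pi-T}.

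For the second identity $\pi_{\overline{TV}} T = T \pi_V$ under the assumption that $T$ is unitary, I would first observe that $T$ is an isometric bijection of $\mathcal{H}$, so the image $TV$ of the closed subspace $V$ is again closed; thus $\overline{TV} = TV$. Moreover a unitary $T$ preserves inner products, so $T(V^\perp) = (TV)^\perp$. Now for $f \in \mathcal{H}$ decompose $f = \pi_V f + \pi_{V^\perp} f$ and apply $T$:
\[
Tf = T \pi_V f + T \pi_{V^\perp} f,
\]
where $T\pi_V f \in TV = \overline{TV}$ and $T\pi_{V^\perp} f \in T(V^\perp) = (TV)^\perp = (\overline{TV})^\perp$. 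By uniqueness of the orthogonal decomposition of $Tf$ with respect to the splitting $\mathcal{H} = \overline{TV} \oplus (\overline{TV})^\perp$, we conclude $\pi_{\overline{TV}}(Tf) = T \pi_V f$, establishing \eqref{lem: piTunitaire}.

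There is no serious obstacle; both parts are standard applications of orthogonal-decomposition arguments. The only care needed is in identifying $(\overline{TV})^\perp$ correctly in the first part (which is where $T^*$ enters, via the adjoint relation) and in noting that unitarity is precisely what is used in the second part to guarantee both that $TV$ is closed and that $T$ maps $V^\perp$ onto $(TV)^\perp$; without unitarity, at most the weaker inclusion $T(V^\perp) \subseteq (TV)^\perp$ would hold.
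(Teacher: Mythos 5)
Your proof is correct. Note that the paper itself gives no proof of this lemma---it is simply quoted from \cite{Ga07}---so there is no in-text argument to compare against; your two orthogonal-decomposition arguments (the identification $(\overline{TV})^{\perp}=\{h\in\mathcal{H}: T^{*}h\in V^{\perp}\}$ for the first identity, and the fact that a unitary carries the splitting $\mathcal{H}=V\oplus V^{\perp}$ onto $\mathcal{H}=TV\oplus (TV)^{\perp}$ for the second) are exactly the standard ones and constitute a complete proof of the cited result.
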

The following definition is a generalization continuous version of fusion frames proposed and defined by Faroughi and Ahmadi \cite{CFUSION FRAMES} as follows: 
\begin{defi}(see \cite{CFUSION FRAMES}) let $\mathbf{F}:\quad X\longrightarrow \hb$ be such that for each $f\in\h$, the mapping $x\longmapsto\pi_{F(x)}(f)$ is measurable $($i.e. is weakly measurable$) $ and let $\{\h_x\}_{x\in X}$ be a collection of Hilbert spaces. For each $x\in X$, suppose that $\Lambda_x\in\mathcal{B}( \mathbf{F}(x),\h_x)$ and put
	\begin{align*}
	\Lambda=\{ \Lambda_x\in \mathcal{B}( \mathbf{F}(x),\h_x):\,\, x\in X\}.
	\end{align*}
Then $(\Lambda, \ff,\omega)$ is a continuous $g$-fusion frame for $\h$ if there exist $0 <A \leq B<\infty$ such that for all $f\in\h$
\begin{equation} \label{def: cgff}
A \Vert f\Vert^2 \leq \int _{X_1}\omega^2(x) \Vert \Lambda_x  \pi_{F(x)}(f)\Vert^2 d\mu(x) \leq B \Vert f\Vert^2 , 
\end{equation}
Where $\pi_{F(x)}$  is the orthogonale projection of $\h$ onto subspace $F(x)$.
\end{defi}
$(\Lambda, \ff,\omega)$ is  called a tight continuous $g$-fusion frame for $\h$ if $A=B$, and Parseval if $A=B=1$.\\
$(\Lambda, \ff,\omega)$ is called a Bessel continuous $g$-fusion frame for $\h$ if the right inequality holds.\\
Let $\mathcal{K}=\oplus_{x\in X}\mathcal{K}_x$ and $L^2(X,\mathcal{K})$ be a collection of all measurable function $\varphi:\quad X\longrightarrow\mathcal{K}$ such that for each $x\in X$ $ \varphi(x) \in \mathcal{K}_x$ and \begin{align*}
\int_X \Vert \varphi(x) \Vert ^2<\infty.
\end{align*}
The synthesis operator is defined weakly as follows (for more details refer to \cite{AK20}):
\begin{eqnarray*}
T_{\mathbf{F},\Lambda}:\quad L^2(X,\mathcal{K}) \longrightarrow \h,\\
\langle T_{\mathbf{F},\Lambda}(\varphi),f\rangle=\int_X \omega(x)\langle \Lambda^*_x(\varphi(x)),h\rangle d\mu(x),
\end{eqnarray*}
 where $ \varphi \in L^2(X,\mathcal{K}) $ and $h\in \h $. It is obvious that $T_{\mathbf{F},\Lambda}$ is linear and by Remark $(1.6)$ in \cite{CFUSION FRAMES}, $T_{\mathbf{F},\Lambda}$ is a bounded linear operator. Its adjoint, that is called analysis operator is:
 \begin{eqnarray*}
 T_{\mathbf{F},\Lambda}^{\ast}:\quad  \h \longrightarrow L^2(X,\mathcal{K},\\
 T_{\mathbf{F},\Lambda}^{\ast} =\omega(.) \Lambda^*_{(.)}\pi_{F(.)}. 
 \end{eqnarray*}
\begin{align}
S_{\mathbf{F},\Lambda}(f)= T_{\mathbf{F},\Lambda} T_{\mathbf{F},\Lambda}^{\ast}(f) =\int _{X_1}\omega^2(x) \pi_{F(x)}\Lambda^*_x\Lambda_x  \pi_{F(x)}(f) d\mu(x),\quad f\in\h.
\end{align}
$S_{\mathbf{F},\Lambda}$ is a bounded, positive,  self-adjoint and invertible operator. and we have
$$
B^{-1} i d_{\h} \leq S_{\mathbf{F},\Lambda}^{-1} \leq A^{-1} i d_{\h} .
$$
So, we have the following reconstruction formula for any $f \in \h$ :
\begin{equation}\label{eq: reconstruction formula}
f=\int _{X}\omega^2(x)   \pi_{F(x)} \Lambda_{x}^{*} \Lambda_{x} \pi_{F(x)} S_{\mathbf{F},\Lambda}^{-1} (f)d\mu(x)=\int _{X}\omega^2(x)   S_{\mathbf{F},\Lambda}^{-1} \pi_{F(x)} \Lambda_{x}^{*} \Lambda_{x} \pi_{F(x)} (f)d\mu(x).
\end{equation}

\section{Inequalities-Equalities for Parseval continuous generalized fusion frame}
 Let  $(\Lambda, \ff,\omega)$ be a continuous $g$-fusion frame for $\h$ with bounds $A$ and $B$. Denoting its canonical dual continuous $g$-fusion frame by $\tilde{\Lambda}:=\left(S_{\mathbf{F},\Lambda}^{-1} F(x), \Lambda_{x} \pi_F(x) S_{\mathbf{F},\Lambda}^{-1},\omega \right)$. Hence for each $f \in \h$ the reconstruction fomula \ref{eq: reconstruction formula} may be written in the form,
$$
f=\int _{X}\omega^2(x)  \pi_{F(x)} \Lambda_{x}^{*} \tilde{\Lambda}_{x}  \pi_{\tilde{F}(x)} (f) d\mu(x)=\int _{X}\omega^2(x)  \pi_{\tilde{F}(x)}\tilde{\Lambda}_{x}^{*} \Lambda_{x}  \pi_{F(x)}(f)d\mu(x).
$$
where $\tilde{F}(x):=S_{\mathbf{F},\Lambda}^{-1} F(x),\, \tilde{\Lambda}_{x}:=\Lambda_{x} \pi_F(x) S_{\mathbf{F},\Lambda}^{-1}$. Thus, we obtain\begin{equation}\label{eq: operator inverse}
\langle S_{\mathbf{F},\Lambda}^{-1} f,f \rangle= \int _{X_1}\omega^2(x) \Vert \tilde{\Lambda}_{x}  \pi_{\tilde{F}(x)}(f)\Vert^2 d\mu(x).
\end{equation}
For any $X_1\subset X$, we denote  $X_1^c=X\setminus X_1 $, and we define the following operators:
\begin{align}
S_{\mathbf{F},\Lambda}^{X_1}f=\int _{X_1}\omega^2(x) \pi_{\mathbf{F}(x)}\Lambda^*_x \tilde{\Lambda}_x  \pi_{{\tilde{F}(x)}}(f) d\mu(x),\quad f\in\h.
\end{align}

\begin{align}
\mathcal{M}_{\mathbf{F},\Lambda}^{X_1}f &=\int _{X_1}\omega^2(x) \pi_{\mathbf{F}(x)}\Lambda^*_x \Lambda_x  \pi_{F(x)}(f) d\mu(x),\quad f\in\h.
\end{align}
Obviously,   $S_{\mathbf{F},\Lambda}=\mathcal{M}_{\mathbf{F},\Lambda}^{X_1}+\mathcal{M}_{\mathbf{F},\Lambda}^{X_1^c}$, and $\mathcal{M}_{\mathbf{F},\Lambda}^{X_1}$, $\mathcal{M}_{\mathbf{F},\Lambda}^{X_1^c}$ are self-adjoint operators. It is easy to check that  $S_{\mathbf{F},\Lambda}^{X_1}$ is a bounded, linear and positive operator. Again, we have
 \begin{align*}
 S_{\mathbf{F},\Lambda}^{X_1}+S_{\mathbf{F},\Lambda}^{X_1^c}=Id_{\h}.
 \end{align*}
\begin{thm}\label{th: theorem1} Let $f\in\h$, then 
\begin{multline}\label{mult: th1}
		 \int _{X_1}\omega^2(x) \langle \tilde{\Lambda}_{x}  \pi_{\tilde{F}(x)}(f),\Lambda_x  \pi_{F(x)}(f)\rangle d\mu(x)- \Vert S_{\mathbf{F},\Lambda}^{X_1} f\Vert^2 \\=\int _{X_1^c}\omega^2(x) \overline{\langle \tilde{\Lambda}_{x}  \pi_{\tilde{F}(x)}(f),\Lambda_x  \pi_{F(x)}(f)\rangle} d\mu(x)- \Vert S_{\mathbf{F},\Lambda}^{X_1^c} f\Vert^2 .
\end{multline}
\end{thm}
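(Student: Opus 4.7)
The plan is to rewrite both sides of \eqref{mult: th1} in terms of the operators $T_1 := S_{\mathbf{F},\Lambda}^{X_1}$ and $T_2 := S_{\mathbf{F},\Lambda}^{X_1^c}$. These satisfy $T_1 + T_2 = Id_{\h}$ as noted just before the theorem statement, and the argument will exploit only this linear constraint rather than self-adjointness of either operator.

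First, I would interchange the inner product with the defining integral of $T_1 f$ (justified by the Bessel bound and weak measurability of the integrand), and use $\pi_{F(x)}^* = \pi_{F(x)}$ together with $(\pi_{F(x)}\Lambda_x^*)^* = \Lambda_x \pi_{F(x)}$ to obtain
\begin{align*}
\langle T_1 f, f\rangle
&= \int_{X_1}\omega^2(x)\langle \pi_{F(x)}\Lambda_x^* \tilde{\Lambda}_x \pi_{\tilde{F}(x)}f,\, f\rangle\, d\mu(x)\\
&= \int_{X_1}\omega^2(x)\langle \tilde{\Lambda}_x \pi_{\tilde{F}(x)}f,\, \Lambda_x \pi_{F(x)} f\rangle\, d\mu(x).
\end{align*}
The analogous computation on $X_1^c$, followed by taking a complex conjugate, gives
\[
\langle f,\, T_2 f\rangle \;=\; \overline{\langle T_2 f,\, f\rangle} \;=\; \int_{X_1^c}\omega^2(x)\overline{\langle \tilde{\Lambda}_x \pi_{\tilde{F}(x)}f,\, \Lambda_x \pi_{F(x)} f\rangle}\, d\mu(x).
\]
Thus \eqref{mult: th1} reduces to the operator-theoretic identity
\[
\langle T_1 f, f\rangle - \Vert T_1 f\Vert^2 \;=\; \langle f, T_2 f\rangle - \Vert T_2 f\Vert^2.
\]

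To establish this reduced identity, I would substitute $T_2 = Id_{\h} - T_1$ on the right-hand side, expand
\[
\Vert T_2 f\Vert^2 = \Vert f\Vert^2 - 2\,\mrm{Re}\langle T_1 f, f\rangle + \Vert T_1 f\Vert^2, \qquad \langle f, T_2 f\rangle = \Vert f\Vert^2 - \langle f, T_1 f\rangle,
\]
and then use $\langle f, T_1 f\rangle = \overline{\langle T_1 f, f\rangle}$ together with $z + \bar z = 2\,\mrm{Re}\, z$. The $\Vert f\Vert^2$ terms and the real parts then cancel in pairs, collapsing the right-hand side to $\langle T_1 f, f\rangle - \Vert T_1 f\Vert^2$, which is exactly the left-hand side.

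The main point to emphasize is that the reduced identity does \emph{not} require $T_1$ or $T_2$ to be self-adjoint. This matters because $T_1 = \mathcal{M}_{\mathbf{F},\Lambda}^{X_1} S_{\mathbf{F},\Lambda}^{-1}$ (obtained from the definitions via Lemma \ref{lem: Projection-operator}) is generally not self-adjoint for a non-Parseval frame, since $\mathcal{M}_{\mathbf{F},\Lambda}^{X_1}$ need not commute with $S_{\mathbf{F},\Lambda}^{-1}$. Consequently, there is no substantive obstacle: only routine measurability/integrability justifications are needed in the reduction step. The complex conjugate appearing on the right-hand side of \eqref{mult: th1} is precisely what permits this self-adjointness-free argument, in direct analogy with the role of the conjugation in the Zhu--Wu identity (Theorem \ref{th: Zhu and Wu identitie}).
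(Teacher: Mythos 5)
Your proposal is correct and follows essentially the same route as the paper: both reduce the claim to the operator identity $\langle T_1f,f\rangle-\Vert T_1f\Vert^2=\langle f,T_2f\rangle-\Vert T_2f\Vert^2$ for $T_1+T_2=Id_{\h}$, the paper via the adjoint manipulation $\langle(Id_{\h}-T_1)^*T_1f,f\rangle=\langle(T_2)^*(Id_{\h}-T_2)f,f\rangle$ and you via direct expansion of $\Vert(Id_{\h}-T_1)f\Vert^2$, which is the same computation rearranged. Your observation that no self-adjointness of $S_{\mathbf{F},\Lambda}^{X_1}$ is needed (and that the conjugate on the right-hand side is exactly what this costs) is accurate and consistent with the paper's argument.
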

\begin{proof} For any $f\in \h$, we have 
\begin{align*}
 &\int _{X_1}\omega^2(x) \langle \tilde{\Lambda}_{x}  \pi_{\tilde{F}(x)}(f),\Lambda_x  \pi_{F(x)}(f)\rangle d\mu(x)- \Vert S_{\mathbf{F},\Lambda}^{X_1} f\Vert^2 \\&= \langle S_{\mathbf{F},\Lambda}^{X_1}f,f \rangle-\Vert S_{\mathbf{F},\Lambda}^{X_1} f\Vert^2\\&=
 \langle S_{\mathbf{F},\Lambda}^{X_1}f,f \rangle-\langle  (S_{\mathbf{F},\Lambda}^{X_1})^*S_{\mathbf{F},\Lambda}^{X_1} f,f\rangle\\&=
 \langle  (Id_{\h}-S_{\mathbf{F},\Lambda}^{X_1})^*S_{\mathbf{F},\Lambda}^{X_1} f,f\rangle =
\langle  (S_{\mathbf{F},\Lambda}^{X_1^c})^*(Id_{\h}-S_{\mathbf{F},\Lambda}^{X_1^c}) f,f\rangle \\&= \langle ( S_{\mathbf{F},\Lambda}^{X_1^c})^{*}f,f \rangle-\langle  (S_{\mathbf{F},\Lambda}^{X_1^c})^*S_{\mathbf{F},\Lambda}^{X_1^c} f,f\rangle.
\end{align*}
Wich completes the proof.
\end{proof}
Furthermore, if we suppose that $(\Lambda, \ff,\omega)$ is a Parseval continuous generalized fusion frame, so we can  easily obtain the same equality  presented in \cite{operatoroperation2} as  :

\begin{thm} \label{thm: parsevalcgfframe} Assume that $(\Lambda, \ff,\omega)$ is a  Parseval continuous generalized fusion frame for $\h$. Then for $X_1\subset X$ and $f\in \h$. The following results hold:
\begin{multline} 
\int _{X_1}\omega^2(x) \Vert \Lambda_x\pi_{F(x)}(f)\Vert^2 d\mu(x)-\Vert \int _{X_1}\omega^2(x) \pi_{F(x)}\Lambda_x^*\Lambda_x  \pi_{F(x)}(f) d\mu(x)\Vert^2 \\= \int _{X_1^c}\omega^2(x) \Vert \Lambda_x\pi_{F(x)}(f)\Vert^2d\mu(x)-\Vert \int _{X_1^c}\omega^2(x) \pi_{F(x)}\Lambda^*_x\Lambda_x  \pi_{F(x)}(f) d\mu(x)\Vert^2.
\end{multline}
	Morever,\begin{align} 
	\int _{X_1^c}\omega^2(x) \Vert \Lambda_x\pi_{F(x)}(f)\Vert^2d\mu(x)-\Vert \int _{X_1^c}\omega^2(x) \pi_{F(x)}\Lambda^*_x\Lambda_x  \pi_{F(x)}(f) d\mu(x)\Vert^2 \geq\frac{3}{4}\Vert  f\Vert^2.
	\end{align}
\end{thm}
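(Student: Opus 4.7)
The plan is to split the theorem into the identity and the inequality, and to reduce each to a tool already proved in the excerpt.

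For the identity, I would begin by observing that the Parseval hypothesis $S_{\mathbf{F},\Lambda} = Id_{\h}$ trivializes the canonical dual: $\tilde{F}(x) = F(x)$ and $\tilde{\Lambda}_{x} = \Lambda_{x}\pi_{F(x)}$, so the operators $S^{X_1}_{\mathbf{F},\Lambda}$ and $\mathcal{M}^{X_1}_{\mathbf{F},\Lambda}$ of Section~3 coincide, and the pairing $\langle \tilde{\Lambda}_{x}\pi_{\tilde{F}(x)}(f), \Lambda_{x}\pi_{F(x)}(f)\rangle$ inside the integrands of \eqref{mult: th1} collapses to the real scalar $\|\Lambda_{x}\pi_{F(x)}(f)\|^{2}$. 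Substituting these simplifications into Theorem \ref{th: theorem1} then gives the first identity of the statement immediately, with the complex conjugate on the right-hand integrand becoming superfluous.

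For the inequality, I would introduce $U := \mathcal{M}^{X_1}_{\mathbf{F},\Lambda}$ and $V := \mathcal{M}^{X_1^c}_{\mathbf{F},\Lambda}$; both are positive self-adjoint operators, and by Parseval $U + V = Id_{\h}$, hence $0 \leq U, V \leq Id_{\h}$. Combining the just-proved identity with the Parseval reconstruction $\int_{X}\omega^{2}(x)\|\Lambda_{x}\pi_{F(x)}(f)\|^{2}d\mu(x) = \|f\|^{2}$, the natural ``$\tfrac{3}{4}$-target'' quantity $\int_{X_1^c}\omega^{2}\|\Lambda_{x}\pi_{F(x)}(f)\|^{2}d\mu + \|\mathcal{M}^{X_1}_{\mathbf{F},\Lambda} f\|^{2}$ can be rewritten as
\[
\langle Vf, f\rangle + \|Uf\|^{2} \;=\; \langle (V + U^{2})f, f\rangle \;=\; \langle (U^{2} - U + Id_{\h})f, f\rangle,
\]
after substituting $V = Id_{\h} - U$ and using self-adjointness. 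Lemma \ref{lem: operatoroperation1}(i) then applies with $a = 1$, $b = -1$, $c = 1$, delivering $\inf_{\|f\|=1}\langle Tf, f\rangle \geq (4ac - b^{2})/(4a) = 3/4$, which is exactly the claimed bound.

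The main obstacle I anticipate is a sign/region mismatch in the displayed statement: taken literally, its left-hand side is $\int_{X_1^c}\omega^{2}\|\Lambda_{x}\pi_{F(x)}f\|^{2}d\mu - \|\mathcal{M}^{X_1^c}_{\mathbf{F},\Lambda}f\|^{2} = \langle (V - V^{2})f, f\rangle$, and Lemma \ref{lem: operatoroperation1}(ii) applied to $T = -V^{2} + V$ ($a=-1$, $b=1$, $c=0$) forces the \emph{upper} bound $\leq \tfrac{1}{4}\|f\|^{2}$, in the wrong direction. The $\tfrac{3}{4}$-assertion therefore requires reading the ``$-$'' as ``$+$'' (or swapping one subscript from $X_1^{c}$ to $X_1$ inside the squared norm), which recovers the Balan--Casazza--Edidin style inequality that the argument above then proves.
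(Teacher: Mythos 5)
Your argument is correct and, for the inequality, is essentially the paper's own: the Parseval collapse $S_{\mathbf{F},\Lambda}=Id_{\h}$ gives $S^{X_1}_{\mathbf{F},\Lambda}=\mathcal{M}^{X_1}_{\mathbf{F},\Lambda}$, the identity then follows by specializing Theorem \ref{th: theorem1} exactly as you describe, and the $\tfrac{3}{4}$ bound comes from Lemma \ref{lem: operatoroperation1} with $a=1$, $b=-1$, $c=1$ applied to $U^{2}-U+Id_{\h}$. Your diagnosis of the sign problem is also right and is confirmed by the paper itself: the operator the paper's proof actually bounds is $(S^{X_1^c}_{\mathbf{F},\Lambda})^{2}-S^{X_1^c}_{\mathbf{F},\Lambda}+Id_{\h}$, whose quadratic form is $\int_{X_1}\omega^{2}(x)\Vert\Lambda_x\pi_{F(x)}f\Vert^{2}d\mu(x)+\Vert\mathcal{M}^{X_1^c}_{\mathbf{F},\Lambda}f\Vert^{2}$, i.e.\ the Balan--Casazza--Edidin form with a plus sign; the displayed version with a minus sign equals $\langle(V-V^{2})f,f\rangle\leq\tfrac{1}{4}\Vert f\Vert^{2}$ and is flatly contradicted by the paper's own later corollary $0\leq S^{X_1}_{\mathbf{F},\Lambda}-(S^{X_1}_{\mathbf{F},\Lambda})^{2}\leq\tfrac{1}{4}Id_{\h}$. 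So the statement as printed is false as a typo, and your corrected reading together with your proof is the intended result.
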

\begin{proof}
Since $(\Lambda, \ff,\omega)$ is a Parseval continuous generalized fusion frame for $\h$, and by using the fact that $S_{\mathbf{F},\Lambda}^{X_1}$ and $S_{\mathbf{F},\Lambda}^{X_1^c}$ are commuting, then for each $f\in\h$, we have
\begin{align*}
&\int _{X_1}\omega^2(x) \Vert \Lambda\pi_{F(x)}(f)\Vert^2 d\mu(x)-\Vert \int _{X_1}\omega^2(x) \pi_{F(x)}\Lambda^*_x\Lambda_x  \pi_{F(x)}(f) d\mu(x)\Vert^2 \\&=\langle (S_{\mathbf{F},\Lambda}^{X_1^c}+(S_{\mathbf{F},\Lambda}^{X_1^c} )^2) f , f \rangle\\&=
\langle ( Id_{\h}-S_{\mathbf{F},\Lambda}^{X}+(S_{\mathbf{F},\Lambda}^{X_1^c} )^2) f , f \rangle.
\end{align*}
Applying Lemma \ref{lem: operatoroperation1} for $a=1$,$b=-1$, and $c=1$, the result follows.
\end{proof}
\begin{cor}\label{cor: parsevalcgfframe} Let $(\Lambda, \ff,\omega)$ is a Parseval continuous generalized fusion frame for $\h$, we have 	 
\begin{multline}
		\frac{1}{2}\Vert  f\Vert^2 \leq \Vert \int _{X_1}\omega^2(x) \pi_{F(x)}\Lambda^*_x\Lambda_x  \pi_{F(x)}(f) d\mu(x)\Vert ^2d\mu(x)\Vert^2\\-\Vert \int _{X_1^c}\omega^2(x) \pi_{F(x)}\Lambda^*_x\Lambda_x  \pi_{F(x)}(f) d\mu(x)\Vert^2 \leq \frac{3}{2}\Vert  f\Vert^2.
	\end{multline}
\begin{multline} 
	\frac{3}{4}\Vert  f\Vert^2 \leq \int _{X_1}\omega^2(x) \Vert \Lambda\pi_{F(x)}(f)\Vert^2d\mu(x)\\-\Vert \int _{X_1^c}\omega^2(x) \pi_{F(x)}\Lambda^*_x\Lambda_x  \pi_{F(x)}(f) d\mu(x)\Vert^2 \leq\Vert  f\Vert^2.
\end{multline}
\end{cor}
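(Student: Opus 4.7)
The plan is to reduce both asserted chains of inequalities to the elementary problem of estimating a real quadratic polynomial on the interval $[0,1]$. The reduction is carried out by first using the Parseval hypothesis to rewrite every integral and every norm-squared appearing in the statement as $\langle(aU^{2}+bU+c\,Id_{\h})f,f\rangle$ for a single self-adjoint operator $U$, and then applying Lemma \ref{lem: operatoroperation1}.

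Concretely, I would set $U:=\mathcal{M}^{X_1^c}_{\mathbf{F},\Lambda}$. The Parseval hypothesis, together with the decomposition $S_{\mathbf{F},\Lambda}=\mathcal{M}^{X_1}_{\mathbf{F},\Lambda}+\mathcal{M}^{X_1^c}_{\mathbf{F},\Lambda}$ already recorded in Section~3, gives $\mathcal{M}^{X_1}_{\mathbf{F},\Lambda}=Id_{\h}-U$; positivity of both partial frame operators then forces $0\le U\le Id_{\h}$, so $\sigma(U)\subseteq[0,1]$. Using self-adjointness, each ingredient of the two displays expands as $\|\mathcal{M}^{X_1^c}_{\mathbf{F},\Lambda}f\|^{2}=\langle U^{2}f,f\rangle$, $\|\mathcal{M}^{X_1}_{\mathbf{F},\Lambda}f\|^{2}=\|f\|^{2}-2\langle Uf,f\rangle+\langle U^{2}f,f\rangle$, $\int_{X_1}\omega^{2}(x)\|\Lambda_{x}\pi_{F(x)}(f)\|^{2}\,d\mu(x)=\langle(Id_{\h}-U)f,f\rangle$, and its $X_{1}^{c}$-counterpart $\langle Uf,f\rangle$; in particular each side of each chain of inequalities collapses to $\langle(aU^{2}+bU+c\,Id_{\h})f,f\rangle$ for definite constants $a,b,c\in\mathbb{R}$.

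For the second inequality I would apply Lemma \ref{lem: operatoroperation1}(i) with $(a,b,c)=(1,-1,1)$ — exactly the choice already used in the proof of Theorem \ref{thm: parsevalcgfframe} — to obtain the lower bound $\tfrac{3}{4}\|f\|^{2}$; the upper bound $\|f\|^{2}$ then follows from $\lambda^{2}-\lambda+1\le 1$ on $[0,1]$, which is the operator inequality $U^{2}\le U$ read off the spectral picture. For the first inequality the same reduction yields a quadratic whose infimum on $[0,1]$ equals $\tfrac{1}{2}$ at an interior critical point and whose supremum equals $\tfrac{3}{2}$ at an endpoint; Lemma \ref{lem: operatoroperation1}(i) with the corresponding $(a,b,c)$ handles the lower bound, while the upper bound is read off by direct evaluation at the endpoints of $\sigma(U)\subseteq[0,1]$ (or, equivalently, by invoking Lemma \ref{lem: operatoroperation1}(ii) after an appropriate sign reversal).

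The main — and really only — obstacle is careful sign bookkeeping: one must check that the sign of the leading coefficient $a$ of the reduced quadratic matches the branch (i) or (ii) of Lemma \ref{lem: operatoroperation1} being invoked, and that the algebraic expansion of the mixed term $\|\mathcal{M}^{X_1}_{\mathbf{F},\Lambda}f\|^{2}=\|(Id_{\h}-U)f\|^{2}$ has been carried out with the correct cross-term $-2\langle Uf,f\rangle$. Once these signs are settled, the four numerical bounds $\tfrac12,\tfrac34,1,\tfrac32$ drop out immediately from the formula $\tfrac{4ac-b^{2}}{4a}$ supplied by Lemma \ref{lem: operatoroperation1}.
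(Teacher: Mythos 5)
Your proposal is correct and takes essentially the same route as the paper: both reduce every term to a quadratic $aU^{2}+bU+c\,Id_{\h}$ in the single self-adjoint operator $U=\mathcal{M}^{X_1^c}_{\mathbf{F},\Lambda}=S^{X_1^c}_{\mathbf{F},\Lambda}$ with $0\le U\le Id_{\h}$, then invoke Lemma \ref{lem: operatoroperation1} together with $U^{2}\le U$, and both implicitly read the two minus signs in the displayed statement as plus signs, which is the version the argument actually establishes. The only slip is cosmetic: the supremum of $2\lambda^{2}-2\lambda+1$ on $[0,1]$ is $1$ (attained at the endpoints), not $\tfrac{3}{2}$, but since $1\le\tfrac{3}{2}$ your endpoint evaluation still delivers the stated upper bound.
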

\begin{proof}
 Observe that \begin{align*}
 (S_{\mathbf{F},\Lambda}^{X_1})^2+(S_{\mathbf{F},\Lambda}^{X_1^c})^2 &= (S_{\mathbf{F},\Lambda}^{X_1})^2+(S_{\mathbf{F},\Lambda}^{X_1^c} )^2 \\&= 2(S_{\mathbf{F},\Lambda}^{X_1})^2-2S_{\mathbf{F},\Lambda}^{X_1}+Id_{\h}.
 \end{align*}
Applying Lemma 	\ref{lem: operatoroperation1}, we get
\begin{align*}
	(S_{\mathbf{F},\Lambda}^{X_1})^2+(S_{\mathbf{F},\Lambda}^{X_1^c})^2\geq \frac{1}{2}Id_{\h}.
\end{align*}
Since $S_{\mathbf{F},\Lambda}^{X_1}-(S_{\mathbf{F},\Lambda}^{X_1})^2 \geq 0$ and 
\begin{align*}
(S_{\mathbf{F},\Lambda}^{X_1})^2+(S_{\mathbf{F},\Lambda}^{X_1^c})^2 &= 2 (S_{\mathbf{F},\Lambda}^{X_1})^2-2S_{\mathbf{F},\Lambda}^{X_1 } + Id_{\h}\\&=Id_{\h}+ 2 S_{\mathbf{F},\Lambda}^{X_1} -2(S_{\mathbf{F},\Lambda}^{X_1})^2+4 ((S_{\mathbf{F},\Lambda}^{X_1})^2-S_{\mathbf{F},\Lambda}^{X_1}).
\end{align*}
Then, we have \begin{align*}
(S_{\mathbf{F},\Lambda}^{X_1})^2+(S_{\mathbf{F},\Lambda}^{X_1^c})^2 \leq Id_{\h}+ 2 S_{\mathbf{F},\Lambda}^{X_1} -2(S_{\mathbf{F},\Lambda}^{X_1})^2
\end{align*}
Applying again Lemma \ref{lem: operatoroperation1}, we get
\begin{align*}
(S_{\mathbf{F},\Lambda}^{X_1})^2+(S_{\mathbf{F},\Lambda}^{X_1^c})^2 \leq \frac{3}{2}Id_{\h}.
\end{align*}
Thus \begin{align*}
\frac{1}{2} Id_{\h}\leq (S_{\mathbf{F},\Lambda}^{X_1})^2+(S_{\mathbf{F},\Lambda}^{X_1^c})^2 \leq \frac{3}{2}Id_{\h}.
\end{align*}
Next,  observing that \begin{eqnarray*}
S_{\mathbf{F},\Lambda}^{X_1}-(S_{\mathbf{F},\Lambda}^{X_1^c})^2 &=& S_{\mathbf{F},\Lambda}^{X_1}-(Id_{\h}-S_{\mathbf{F},\Lambda}^{X_1 })^2 \\&=&(S_{\mathbf{F},\Lambda}^{X_1 })^2 - S_{\mathbf{F},\Lambda}^{X_1}+ Id_{\h}.
\end{eqnarray*}
And since $S_{\mathbf{F},\Lambda}^{X_1}-(S_{\mathbf{F},\Lambda}^{X_1})^2\geq 0$, implies that 
\begin{align*}
\frac{3}{2} Id_{\h}\leq  S_{\mathbf{F},\Lambda}^{X_1}+(S_{\mathbf{F},\Lambda}^{X_1})^2 \leq  Id_{\h}.
\end{align*} 
by Lemma \ref{lem: operatoroperation1}, for each $f\in\h$, we get
\begin{eqnarray*}
&& \langle (S_{\mathbf{F},\Lambda}^{X_1}-(S_{\mathbf{F},\Lambda}^{X_1})^2)f,f\rangle=\langle  S_{\mathbf{F},\Lambda}^{X_1}f,f\rangle-\langle(S_{\mathbf{F},\Lambda}^{X_1})^2 f,f\rangle \\&=&
  \int _{X_1}\omega^2(x) \Vert \Lambda\pi_{F(x)}(f)\Vert^2d\mu(x) -\Vert \int _{X_1^c}\omega^2(x) \pi_{F(x)}\Lambda^*_x\Lambda_x  \pi_{F(x)}(f) d\mu(x)\Vert^2.
\end{eqnarray*}
The proof is completed.
\end{proof}
\begin{cor}
Let $(\Lambda, \ff,\omega)$ be a  Parseval continuous generalized fusion frame for $\h$. Then 
\begin{equation}
0\leq S_{\mathbf{F},\Lambda}^{X_1}-(S_{\mathbf{F},\Lambda}^{X_1})^2\leq \frac{1}{4}Id_{\h}.
\end{equation}
\end{cor}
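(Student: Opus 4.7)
The plan is to reduce the statement to a spectral/quadratic inequality for a single positive contraction. Since $(\Lambda, \ff, \omega)$ is Parseval, $S_{\mathbf{F},\Lambda} = Id_{\h}$, so the canonical dual collapses: $\tilde F(x) = F(x)$ and $\tilde\Lambda_x = \Lambda_x \pi_{F(x)}$. Substituting these into the definition of $S_{\mathbf{F},\Lambda}^{X_1}$ and using $\pi_{F(x)}^2 = \pi_{F(x)}$ yields $S_{\mathbf{F},\Lambda}^{X_1} = \mathcal{M}_{\mathbf{F},\Lambda}^{X_1}$. In particular, $S_{\mathbf{F},\Lambda}^{X_1}$ is self-adjoint. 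Combined with the identity $S_{\mathbf{F},\Lambda}^{X_1} + S_{\mathbf{F},\Lambda}^{X_1^c} = Id_{\h}$ and the positivity of both summands, this gives $0 \leq S_{\mathbf{F},\Lambda}^{X_1} \leq Id_{\h}$.

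For the lower bound, I would factor
\begin{equation*}
S_{\mathbf{F},\Lambda}^{X_1} - (S_{\mathbf{F},\Lambda}^{X_1})^{2} = S_{\mathbf{F},\Lambda}^{X_1}\bigl(Id_{\h} - S_{\mathbf{F},\Lambda}^{X_1}\bigr) = S_{\mathbf{F},\Lambda}^{X_1}\,S_{\mathbf{F},\Lambda}^{X_1^c}.
\end{equation*}
Since $S_{\mathbf{F},\Lambda}^{X_1} + S_{\mathbf{F},\Lambda}^{X_1^c} = Id_{\h}$, the two operators commute, and each is positive self-adjoint, so their product is a positive operator; this gives $S_{\mathbf{F},\Lambda}^{X_1} - (S_{\mathbf{F},\Lambda}^{X_1})^{2} \geq 0$.

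For the upper bound, I would apply Lemma \ref{lem: operatoroperation1}(ii) to $U = S_{\mathbf{F},\Lambda}^{X_1}$ with coefficients $a = -1$, $b = 1$, $c = 0$, producing
\begin{equation*}
T = -U^{2} + U = S_{\mathbf{F},\Lambda}^{X_1} - (S_{\mathbf{F},\Lambda}^{X_1})^{2}, \qquad \frac{4ac - b^{2}}{4a} = \frac{-1}{-4} = \frac{1}{4}.
\end{equation*}
Since $a < 0$, the lemma yields $\sup_{\|f\|=1}\langle Tf, f\rangle \leq \tfrac{1}{4}$, which is precisely $S_{\mathbf{F},\Lambda}^{X_1} - (S_{\mathbf{F},\Lambda}^{X_1})^{2} \leq \tfrac{1}{4}\,Id_{\h}$.

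There is essentially no hard step here: the only thing to check carefully is that under the Parseval hypothesis the operator $S_{\mathbf{F},\Lambda}^{X_1}$ is self-adjoint (so that Lemma \ref{lem: operatoroperation1} applies) and that it is a contraction. Both of these follow from the collapse of the canonical dual and the partition of unity $S_{\mathbf{F},\Lambda}^{X_1} + S_{\mathbf{F},\Lambda}^{X_1^c} = Id_{\h}$ discussed earlier in the section; after that, the two bounds are immediate from factoring and one application of the quadratic inequality lemma.
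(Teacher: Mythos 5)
Your proposal is correct and follows essentially the same route as the paper: both arguments obtain the lower bound by writing $S_{\mathbf{F},\Lambda}^{X_1}-(S_{\mathbf{F},\Lambda}^{X_1})^2$ as the product of the two commuting positive self-adjoint operators $S_{\mathbf{F},\Lambda}^{X_1}$ and $S_{\mathbf{F},\Lambda}^{X_1^c}$, and the upper bound by applying Lemma \ref{lem: operatoroperation1} to the quadratic $-U^2+U$. Your version is slightly more careful in that it makes explicit the coefficients $a=-1$, $b=1$, $c=0$ and the Parseval collapse $\tilde\Lambda_x=\Lambda_x\pi_{F(x)}$ ensuring self-adjointness, which the paper leaves implicit.
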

\begin{proof} Since $S_{\mathbf{F},\Lambda}^{X_1}S_{\mathbf{F},\Lambda}^{X_1^c}=S_{\mathbf{F},\Lambda}^{X_1^c}S_{\mathbf{F},\Lambda}^{X_1}$, and $S_{\mathbf{F},\Lambda}^{X_1}$, $S_{\mathbf{F},\Lambda}^{X_1^c}$ are positive, self-adjoint operators, it follows that $S_{\mathbf{F},\Lambda}^{X_1}S_{\mathbf{F},\Lambda}^{X_1^c}$ is also positive and self-adjoint. Hence, we have 
$$ 0\leq S_{\mathbf{F},\Lambda}^{X_1}S_{\mathbf{F},\Lambda}^{X_1^c}= S_{\mathbf{F},\Lambda}^{X_1}-(S_{\mathbf{F},\Lambda}^{X_1^c})^2. $$
Applying Lemma \ref{lem: operatoroperation1} yields
 $$ S_{\mathbf{F},\Lambda}^{X_1}-(S_{\mathbf{F},\Lambda}^{X_1})^2\leq \frac{1}{4}Id_{\h}.$$
\end{proof}

Observe that  $S_{\mathbf{F},\Lambda}$ (resp. $S_{\mathbf{F},\Lambda}^{-1}$) is a positive operator in $\mathcal{B}(\h)$, then there exists a unique positive square root $S_{\mathbf{F},\Lambda}^{\frac{1}{2}}$ (resp. $S_{\mathbf{F},\Lambda}^{-\frac{1}{2}}$) which commutes with every operator which commutes with  $S_{\mathbf{F},\Lambda}$ (resp. $S_{\mathbf{F},\Lambda}^{-1}$). Therefore, for each $f\in \h$ we have 
\begin{align*}
f&=S_{\mathbf{F},\Lambda}^{-\frac{1}{2}} S_{\mathbf{F},\Lambda}S_{\mathbf{F},\Lambda}^{-\frac{1}{2}}f\\
&= \int _{X_1}\omega^2(x) S_{\mathbf{F},\Lambda}^{-\frac{1}{2}}\pi_{F(x)}\Lambda_x^*\Lambda_x\pi_{F(x)}  S_{\mathbf{F},\Lambda}^{-\frac{1}{2}} f d\mu(x),
\end{align*}
thus, applying Lemma \ref{lem: Projection-operator} gives that
$$
\begin{aligned}
\|f\|^{2} &=  \langle\int _{X_1}\omega^2(x) S_{\mathbf{F},\Lambda}^{-\frac{1}{2}}\pi_{F(x)}\Lambda_x^*\Lambda_x\pi_{F(x)}  S_{\mathbf{F},\Lambda}^{-\frac{1}{2}} f d\mu(x), f \rangle \\
&=\int _{X_1}\omega^2(x)\left\|\Lambda_{x} \pi_{F(x)} S_{\mathbf{F},\Lambda}^{-\frac{1}{2}} f\right\|^{2} d\mu(x)\\
&=\int _{X_1}\omega^2(x) \left\|\Lambda_{x}\pi_{F(x)} S_{\mathbf{F},\Lambda}^{-\frac{1}{2}} \pi_{S_{\mathbf{F},\Lambda}^{-\frac{1}{2}}{F(x)}} f\right\|^{2}d\mu(x)
\end{aligned}
$$
which means that $(S_{\mathbf{F},\Lambda}^{-\frac{1}{2}}F(x), \Lambda_{x} \pi_{F(x)}S_{\mathbf{F},\Lambda}^{-\frac{1}{2}},\omega  )$ is a Parseval continuous generalized fusion frame. Hence we have the following theorem:
\begin{thm} \label{thm: ParsevalframeS}
Let $(\Lambda, \ff,\omega)$ be a  Parseval continuous generalized fusion frame for $\h$. Then 
	\begin{multline}
	\int _{X_1}\omega^2(x) \Vert \Lambda_x\pi_{F(x)}(f)\Vert^2d\mu(x)- \Vert S_{\mathbf{F},\Lambda}^{-\frac{1}{2}} S_{\mathbf{F},\Lambda}^{X_1} f\Vert^2 \\=\int _{X_1^c}\omega^2(x) \Vert \Lambda_x\pi_{F(x)}(f)\Vert^2d\mu(x)- \Vert S_{\mathbf{F},\Lambda}^{-\frac{1}{2}} S_{\mathbf{F},\Lambda}^{X_1^c} f\Vert^2.
\end{multline}
\end{thm}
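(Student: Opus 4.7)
The plan is to reduce this identity to its Parseval case, namely Theorem~\ref{thm: parsevalcgfframe}, by applying that theorem to the Parseval continuous generalized fusion frame $(\hat{F}(x), \hat{\Lambda}_{x}, \omega) := \bigl(S_{\mathbf{F},\Lambda}^{-1/2} F(x),\; \Lambda_{x}\pi_{F(x)} S_{\mathbf{F},\Lambda}^{-1/2},\; \omega\bigr)$ produced in the paragraph immediately preceding the statement, and then translating the resulting equality back through $S_{\mathbf{F},\Lambda}^{1/2}$.

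First I would establish the commutation relations that allow one to move $S_{\mathbf{F},\Lambda}^{-1/2}$ past the relevant projections. Since $S_{\mathbf{F},\Lambda}^{-1/2}$ is self-adjoint, Lemma~\ref{lem: Projection-operator} applied with $T = S_{\mathbf{F},\Lambda}^{-1/2}$ and $V = F(x)$ gives
\[
\pi_{F(x)} S_{\mathbf{F},\Lambda}^{-1/2} \;=\; \pi_{F(x)} S_{\mathbf{F},\Lambda}^{-1/2} \pi_{\hat{F}(x)},
\]
and taking adjoints yields $S_{\mathbf{F},\Lambda}^{-1/2}\pi_{F(x)} = \pi_{\hat{F}(x)} S_{\mathbf{F},\Lambda}^{-1/2}\pi_{F(x)}$. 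Combining these with the definition of $\hat{\Lambda}_{x}$ produces, for every $g \in \h$,
\[
\hat{\Lambda}_{x}\pi_{\hat{F}(x)} g \;=\; \Lambda_{x}\pi_{F(x)} S_{\mathbf{F},\Lambda}^{-1/2} g, \qquad \pi_{\hat{F}(x)}\hat{\Lambda}_{x}^{*} \;=\; S_{\mathbf{F},\Lambda}^{-1/2}\pi_{F(x)}\Lambda_{x}^{*}.
\]

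Next I would apply Theorem~\ref{thm: parsevalcgfframe} to the Parseval frame $(\hat{F}, \hat{\Lambda}, \omega)$ evaluated at the vector $g := S_{\mathbf{F},\Lambda}^{1/2} f$. Substituting the two commutation identities above collapses the pointwise integrand to $\|\hat{\Lambda}_{x}\pi_{\hat{F}(x)} g\|^{2} = \|\Lambda_{x}\pi_{F(x)} f\|^{2}$, while the global integral
\[
\int_{X_1}\omega^{2}(x)\,\pi_{\hat{F}(x)}\hat{\Lambda}_{x}^{*}\hat{\Lambda}_{x}\pi_{\hat{F}(x)}\bigl(S_{\mathbf{F},\Lambda}^{1/2} f\bigr)\,d\mu(x)
\]
reduces to $S_{\mathbf{F},\Lambda}^{-1/2}\, S_{\mathbf{F},\Lambda}^{X_1} f$ by pulling $S_{\mathbf{F},\Lambda}^{-1/2}$ out on the left and recognizing the remaining integral through the definition of $S_{\mathbf{F},\Lambda}^{X_1}$ together with the reconstruction formula~(\ref{eq: reconstruction formula}); the analogous identity holds on $X_1^c$. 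Plugging these expressions into the Parseval identity produces exactly the equation stated in the theorem.

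The main obstacle is the careful use of Lemma~\ref{lem: Projection-operator} to transport the projections between $F(x)$ and $\hat{F}(x) = S_{\mathbf{F},\Lambda}^{-1/2} F(x)$ across $S_{\mathbf{F},\Lambda}^{-1/2}$; once those transport identities are in hand, the remainder of the argument is formal substitution into Theorem~\ref{thm: parsevalcgfframe}.
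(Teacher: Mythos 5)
Your proposal is correct and follows essentially the same route as the paper: both apply Theorem~\ref{thm: parsevalcgfframe} to the induced Parseval continuous generalized fusion frame $\bigl(S_{\mathbf{F},\Lambda}^{-1/2}F(x),\,\Lambda_x\pi_{F(x)}S_{\mathbf{F},\Lambda}^{-1/2},\,\omega\bigr)$ and then substitute $f\mapsto S_{\mathbf{F},\Lambda}^{1/2}f$. Your explicit use of Lemma~\ref{lem: Projection-operator} to transport the projections is in fact a more careful rendering of the step the paper leaves implicit.
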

\begin{proof}
Assume that $ \chi_x:=\Lambda_x \pi_{F(x)}S_{\mathbf{F},\Lambda}^{-\frac{1}{2}}$ and $ V(x):= S_{\mathbf{F},\Lambda}^{\frac{1}{2}} F(x)$, then by applying the previous result $(S_{\mathbf{F},\Lambda}^{-\frac{1}{2}}F(x), \Lambda_{x} \pi_{F(x)}S_{\mathbf{F},\Lambda}^{-\frac{1}{2}},\omega  )$ is a Parseval continuous generalized fusion frame, applying Corollary \ref{thm: parsevalcgfframe}, we get
\begin{multline*}
\int _{X_1}\omega^2(x)\left\|\chi_x \pi_{V(x)} f\right\|^{2}d\mu(x)+\left\| \int _{X_1}\omega^2(x)\pi_{V(x)}\chi_x^*\chi_x \pi_{V(x)} fd\mu(x)\right\|^{2}\\
\int _{X_1^c}\omega^2(x)\left\|\chi_x \pi_{V(x)} f\right\|^{2}d\mu(x)+\left\| \int _{X_1^c}\omega^2(x)\pi_{V(x)}\chi_x^*\chi_x \pi_{V(x)} fd\mu(x)\right\|^{2}.
\end{multline*}
Moerever, we have 
\begin{eqnarray*}
\int _{X_1}\omega^2(x)\pi_{V(x)}\chi_x^*\chi_x \pi_{V(x)} f d\mu(x) &=&\int _{X_1}\omega^2(x)\left(\chi_x \pi_{V(x)}\right)^{*} \chi_x \pi_{V(x)} f d\mu(x) =\\
&=&\int _{X_1}\omega^2(x)\left(\Lambda_x \pi_{F(x)}S_{\mathbf{F},\Lambda}^{-\frac{1}{2}} \pi_{V(x)}\right)^{*} \Lambda_x \pi_{F(x)}S_{\Lambda}^{-\frac{1}{2}} \pi_{V(x)} f=\\
&=& \int _{X_1}\omega^2(x) S_{\mathbf{F},\Lambda}^{-\frac{1}{2}}  \pi_{V(x)} \Lambda_{x}^{*} \Lambda_{x}  \pi_{V(x)} S_{\mathbf{F},\Lambda}^{-\frac{1}{2}} f\\
&=& S_{\mathbf{F},\Lambda}^{-\frac{1}{2}} S_{\mathbf{F},\Lambda}S_{\mathbf{F},\Lambda}^{-\frac{1}{2}} f.
\end{eqnarray*}
Now, By replacing $f$ by $S_{\mathbf{F},\Lambda}^{\frac{1}{2}} f$, the proof is complete.
\end{proof}

\begin{cor}
	Let $(\Lambda, \ff,\omega)$ be a Parseval continuous generalized fusion frame for $\h$. Then 
	\begin{equation}
	0\leq S_{\mathbf{F},\Lambda}^{X_1}-S_{\mathbf{F},\Lambda}^{X_1}S_{\mathbf{F},\Lambda}^{-1}S_{\mathbf{F},\Lambda}^{X_1} \leq \frac{1}{4}S_{\mathbf{F},\Lambda}.
	\end{equation}
\end{cor}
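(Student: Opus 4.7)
The central observation is that the asserted inequality simplifies dramatically under the Parseval hypothesis: since $(\Lambda,\ff,\omega)$ is Parseval, $S_{\mathbf{F},\Lambda}=Id_{\h}$, so $S_{\mathbf{F},\Lambda}^{-1}=Id_{\h}$, the middle term $S_{\mathbf{F},\Lambda}^{X_1}S_{\mathbf{F},\Lambda}^{-1}S_{\mathbf{F},\Lambda}^{X_1}$ collapses to $(S_{\mathbf{F},\Lambda}^{X_1})^2$, and the right-hand bound $\tfrac{1}{4}S_{\mathbf{F},\Lambda}$ collapses to $\tfrac{1}{4}Id_{\h}$. The claim is therefore exactly
\[
0\leq S_{\mathbf{F},\Lambda}^{X_1}-(S_{\mathbf{F},\Lambda}^{X_1})^2\leq \tfrac{1}{4}Id_{\h},
\]
which is the preceding Corollary. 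I would first record this reduction explicitly, making clear that the Parseval hypothesis is what allows us to substitute identities for $S_{\mathbf{F},\Lambda}$ and $S_{\mathbf{F},\Lambda}^{-1}$.

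For completeness, I would then supply a short self-contained derivation that does not rely on having to cite the earlier corollary. The ingredients are all available: by the identity noted just before Theorem \ref{th: theorem1}, $S_{\mathbf{F},\Lambda}^{X_1}+S_{\mathbf{F},\Lambda}^{X_1^c}=Id_{\h}$, the two operators are self-adjoint and positive, and in the Parseval case they commute (cf.\ the proof of Theorem \ref{thm: parsevalcgfframe}). From commutativity and positivity, $S_{\mathbf{F},\Lambda}^{X_1}S_{\mathbf{F},\Lambda}^{X_1^c}\geq 0$; rewriting $S_{\mathbf{F},\Lambda}^{X_1^c}=Id_{\h}-S_{\mathbf{F},\Lambda}^{X_1}$ gives $S_{\mathbf{F},\Lambda}^{X_1}-(S_{\mathbf{F},\Lambda}^{X_1})^2\geq 0$, i.e.\ the lower bound.

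For the upper bound, I would apply Lemma \ref{lem: operatoroperation1}(ii) to the self-adjoint operator $U:=S_{\mathbf{F},\Lambda}^{X_1}$ and the polynomial $T:=-U^2+U$, so that $a=-1$, $b=1$, $c=0$. The lemma then yields
\[
\sup_{\|f\|=1}\langle T f,f\rangle\leq \frac{4ac-b^2}{4a}=\frac{-1}{-4}=\frac{1}{4},
\]
that is, $S_{\mathbf{F},\Lambda}^{X_1}-(S_{\mathbf{F},\Lambda}^{X_1})^2\leq \tfrac{1}{4}Id_{\h}$. Combining the two bounds and re-expressing via the Parseval identities $Id_{\h}=S_{\mathbf{F},\Lambda}^{-1}$ (in the middle) and $Id_{\h}=S_{\mathbf{F},\Lambda}$ (on the right) recovers the claim in the stated form.

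There is essentially no obstacle here: the only care needed is to justify the reductions using the Parseval assumption, and to correctly bookkeep the signs when applying Lemma \ref{lem: operatoroperation1}(ii) (in particular, that $a=-1<0$, so it is the supremum bound that we invoke). If one wanted to state the same inequality for a general (non-Parseval) continuous $g$-fusion frame, the dilation trick used in Theorem \ref{thm: ParsevalframeS} — passing to the canonical Parseval frame $(S_{\mathbf{F},\Lambda}^{-\frac{1}{2}}F(x),\Lambda_x\pi_{F(x)}S_{\mathbf{F},\Lambda}^{-\frac{1}{2}},\omega)$ and then conjugating the resulting operator inequality by $S_{\mathbf{F},\Lambda}^{\frac{1}{2}}$ — would give the full form with $S_{\mathbf{F},\Lambda}^{-1}$ and $S_{\mathbf{F},\Lambda}$ genuinely present; that would be the most natural extension to flag in a remark.
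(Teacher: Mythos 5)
Your argument is correct, but it follows a different route from the paper's. You take the Parseval hypothesis at face value, note that it forces $S_{\mathbf{F},\Lambda}=S_{\mathbf{F},\Lambda}^{-1}=Id_{\h}$ so that the claim collapses to $0\leq S_{\mathbf{F},\Lambda}^{X_1}-(S_{\mathbf{F},\Lambda}^{X_1})^2\leq \frac{1}{4}Id_{\h}$, and then reprove that statement directly (lower bound from positivity of the product of the two commuting positive operators $S_{\mathbf{F},\Lambda}^{X_1}$ and $Id_{\h}-S_{\mathbf{F},\Lambda}^{X_1}$; upper bound from Lemma \ref{lem: operatoroperation1}(ii) with $a=-1$, $b=1$, $c=0$ — your sign bookkeeping there is right). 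The paper instead does not use the Parseval hypothesis of the corollary at all: it invokes the dilation from the proof of Theorem \ref{thm: ParsevalframeS}, applies the preceding corollary to the associated Parseval system $(S_{\mathbf{F},\Lambda}^{-\frac{1}{2}}F(x),\Lambda_x\pi_{F(x)}S_{\mathbf{F},\Lambda}^{-\frac{1}{2}},\omega)$, and conjugates the resulting inequality by $S_{\mathbf{F},\Lambda}^{\frac{1}{2}}$ to produce exactly the stated form $0\leq S_{\mathbf{F},\Lambda}^{X_1}-S_{\mathbf{F},\Lambda}^{X_1}S_{\mathbf{F},\Lambda}^{-1}S_{\mathbf{F},\Lambda}^{X_1}\leq\frac{1}{4}S_{\mathbf{F},\Lambda}$ — precisely the "extension" you relegate to a closing remark. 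What each buys: your reduction is more elementary and exposes that, as literally stated with the Parseval hypothesis, the corollary adds nothing beyond the previous one; the paper's conjugation argument is what makes the result meaningful, since it establishes the inequality for a general continuous $g$-fusion frame, where $S_{\mathbf{F},\Lambda}\neq Id_{\h}$ and the three-factor middle term and the $\frac{1}{4}S_{\mathbf{F},\Lambda}$ bound genuinely matter. You correctly identified both readings, so there is no gap; had you only given the trivial reduction without the final remark, you would have missed the intended content.
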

\begin{proof} In the previous proof of theorem \ref{thm: ParsevalframeS}, we showed that
$$
\int _{X_1}\omega^2(x)\pi_{V(x)}\chi_x^*\chi_x \pi_{V(x)} f d\mu(x)= S_{\mathbf{F},\Lambda}^{-\frac{1}{2}} S_{\mathbf{F},\Lambda}S_{\mathbf{F},\Lambda}^{-\frac{1}{2}} f.
$$
By applying Corollary \ref{cor: parsevalcgfframe}, we get
$$
0 \leq \int _{X_1}\omega^2(x)\pi_{V(x)}\chi_x^*\chi_x \pi_{V(x)} f d\mu(x)-\left(\int _{X_1}\omega^2(x)\pi_{V(x)}\chi_x^*\chi_x \pi_{V(x)} f d\mu(x)\right)^{2} \leq \frac{1}{4} Id_{\h}
$$
Therefore, we have
$$
0 \leq S_{\mathbf{F},\Lambda}^{-\frac{1}{2}} \left( S_{\mathbf{F},\Lambda}^{X_1}- S_{\mathbf{F},\Lambda}^{X_1}S_{\mathbf{F},\Lambda}^{-1} S_{\mathbf{F},\Lambda}^{X_1}  \right) S_{\mathbf{F},\Lambda}^{-\frac{1}{2}}  \leq \frac{1}{4} Id_{\h}.
$$
\end{proof}
\begin{cor}
	Suppose that $(\Lambda, \ff,\omega)$ is  a continuous generalized fusion frame for $\h$ with continuous g-fusion frame operator $S_{\mathbf{F},\Lambda}$. If $X_1\subseteq X$ and $f\in\h$, we have
	\begin{equation}
	\int _{X_1}\omega^2(x) \Vert \Lambda_x\pi_{F(x)}(f)\Vert^2d\mu(x)- \Vert S_{\mathbf{F},\Lambda}^{-\frac{1}{2}} S_{\mathbf{F},\Lambda}^{X_1^c} f\Vert^2\geq\frac{3}{4}\left\| S_{\mathbf{F},\Lambda}^{-1}\right\|^{-1}\Vert  f\Vert^2.
   \end{equation}
\end{cor}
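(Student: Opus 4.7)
My plan is to reduce to the Parseval case by applying Corollary \ref{cor: parsevalcgfframe} to the auxiliary Parseval continuous generalized fusion frame $(\chi, V, \omega)$ with $\chi_x := \Lambda_x \pi_{F(x)} S_{\mathbf{F},\Lambda}^{-\frac{1}{2}}$ and $V(x) := S_{\mathbf{F},\Lambda}^{-\frac{1}{2}} F(x)$, whose Parseval property was established in the discussion preceding Theorem \ref{thm: ParsevalframeS}. The second inequality of Corollary \ref{cor: parsevalcgfframe}, applied to $(\chi, V, \omega)$ at an arbitrary $g \in \h$, gives
\begin{equation*}
\int_{X_1} \omega^2(x) \|\chi_x \pi_{V(x)} g\|^2 \, d\mu(x) - \Bigl\| \int_{X_1^c} \omega^2(x) \pi_{V(x)} \chi_x^* \chi_x \pi_{V(x)} g \, d\mu(x) \Bigr\|^2 \geq \tfrac{3}{4}\|g\|^2.
\end{equation*}

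Next I would substitute $g = S_{\mathbf{F},\Lambda}^{\frac{1}{2}} f$ and simplify each term using the projection calculations from the proof of Theorem \ref{thm: ParsevalframeS}. Applying Lemma \ref{lem: Projection-operator} with $T = S_{\mathbf{F},\Lambda}^{-\frac{1}{2}}$ and the subspace $F(x)$ yields $\pi_{F(x)} S_{\mathbf{F},\Lambda}^{-\frac{1}{2}} = \pi_{F(x)} S_{\mathbf{F},\Lambda}^{-\frac{1}{2}} \pi_{V(x)}$, so that $\chi_x \pi_{V(x)} S_{\mathbf{F},\Lambda}^{\frac{1}{2}} f = \Lambda_x \pi_{F(x)} f$. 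The same identity rewrites the inner integral on $X_1^c$ as $S_{\mathbf{F},\Lambda}^{-\frac{1}{2}} S_{\mathbf{F},\Lambda}^{X_1^c} S_{\mathbf{F},\Lambda}^{-\frac{1}{2}} S_{\mathbf{F},\Lambda}^{\frac{1}{2}} f = S_{\mathbf{F},\Lambda}^{-\frac{1}{2}} S_{\mathbf{F},\Lambda}^{X_1^c} f$, mirroring the computation already carried out there. The display above then becomes
\begin{equation*}
\int_{X_1} \omega^2(x) \|\Lambda_x \pi_{F(x)} f\|^2 \, d\mu(x) - \|S_{\mathbf{F},\Lambda}^{-\frac{1}{2}} S_{\mathbf{F},\Lambda}^{X_1^c} f\|^2 \geq \tfrac{3}{4}\|S_{\mathbf{F},\Lambda}^{\frac{1}{2}} f\|^2.
\end{equation*}

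The last step is to convert the right-hand side into $\tfrac34\|S_{\mathbf{F},\Lambda}^{-1}\|^{-1}\|f\|^2$. Since $S_{\mathbf{F},\Lambda}^{-1}$ is bounded, positive and self-adjoint, $S_{\mathbf{F},\Lambda}^{-1} \leq \|S_{\mathbf{F},\Lambda}^{-1}\|\, Id_{\h}$, and inverting this operator inequality gives $S_{\mathbf{F},\Lambda} \geq \|S_{\mathbf{F},\Lambda}^{-1}\|^{-1}\, Id_{\h}$, so that $\|S_{\mathbf{F},\Lambda}^{\frac{1}{2}} f\|^2 = \langle S_{\mathbf{F},\Lambda} f, f\rangle \geq \|S_{\mathbf{F},\Lambda}^{-1}\|^{-1}\|f\|^2$, which closes the estimate. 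The main obstacle is essentially bookkeeping: correctly pushing $S_{\mathbf{F},\Lambda}^{-\frac{1}{2}}$ through the projections via Lemma \ref{lem: Projection-operator}; once that is done the argument is a direct reduction to the Parseval corollary together with the one-line spectral comparison on $S_{\mathbf{F},\Lambda}$.
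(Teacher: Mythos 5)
Your argument is correct and follows essentially the same route as the paper: both reduce to the Parseval continuous generalized fusion frame $(S_{\mathbf{F},\Lambda}^{-\frac{1}{2}}F(x), \Lambda_x\pi_{F(x)}S_{\mathbf{F},\Lambda}^{-\frac{1}{2}},\omega)$, apply the $\tfrac{3}{4}$ lower bound from the Parseval case at $S_{\mathbf{F},\Lambda}^{\frac{1}{2}}f$, and conclude via $\tfrac{3}{4}\langle S_{\mathbf{F},\Lambda}f,f\rangle \geq \tfrac{3}{4}\Vert S_{\mathbf{F},\Lambda}^{-1}\Vert^{-1}\Vert f\Vert^2$. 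Your version is in fact cleaner, since you make explicit the projection bookkeeping (via Lemma \ref{lem: Projection-operator}) that the paper leaves implicit.
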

\begin{proof} By Theorem \ref{thm: ParsevalframeS} and Theorem \ref{thm: parsevalcgfframe}, we can write

\begin{align*}
&\int _{X_1}\omega^2(x) \left\| \Lambda_x\pi_{F(x)}(f) \right\|^{2} d\mu(x)+\left\| S_{\mathbf{F},\Lambda}^{-\frac{1}{2}} S_{\mathbf{F},\Lambda}^{X_1^c} f\right\|\\&=
\int _{X_1}\omega^2(x)\left\|\chi_x  \pi_{V(x)} S_{\mathbf{F},\Lambda}^{-\frac{1}{2}} f\right\|^{2} d\mu(x)+\left\|\int _{X_1^c}\omega^2(x) \pi_{V(x)} \chi_x^*\chi_x  f \pi_{V(x)}  S_{\mathbf{F},\Lambda}^{ \frac{1}{2}} f\right\|^{2}\\
& \geq \frac{3}{4}\left\| S_{\mathbf{F},\Lambda}^{\frac{1}{2}} f\right\|^{2}\\&=\frac{3}{4}\left\langle  S_{\mathbf{F},\Lambda} f, f\right\rangle \\&\geq \frac{3}{4}\left\| S_{\mathbf{F},\Lambda}^{-1}\right\|^{-1}\|f\|^{2} .
\end{align*}

The proof is complete.
\end{proof}
\begin{thm} Let $(\Lambda, \ff,\omega)$ be a  continuous generalized fusion frame for $\h$. Then for $X_1\subset X$, then,  for each $f\in \h$, we have 
	\begin{multline}
	\int _{X_1}\omega^2(x) \Vert \Lambda_x\pi_{F(x)}(f)\Vert^2 d\mu(x)-  \int _{X_1}\omega^2(x) \Vert \tilde{\Lambda}_{x}  \pi_{\tilde{F}(x)} \mathcal{M}_{\mathbf{F},\Lambda}^{X_1}
	(f) \Vert^2 d\mu(x)  \\=	\int _{X_1^c}\omega^2(x) \Vert \Lambda_x\pi_{F(x)}(f)\Vert^2d\mu(x)-  \int _{X_1^c}\omega^2(x) \Vert \tilde{\Lambda}_{x}  \pi_{\tilde{F}(x)} \mathcal{M}_{\mathbf{F},\Lambda}^{X_1^c} f\Vert^2 d\mu(x).
	\end{multline}
	Where \begin{align*}
	\mathcal{M}_{\mathbf{F},\Lambda}^{X_1}f=\int _{X_1}\omega^2(x)\pi_{F(x)} \Lambda_x^*\Lambda_x \pi_{F(x)} f.
	\end{align*}
\end{thm}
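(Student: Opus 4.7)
The plan is to deduce the identity from the Parseval case (Theorem \ref{thm: parsevalcgfframe}) applied to the canonical Parseval continuous $g$-fusion frame $(V(x),\chi_x,\omega)$ constructed just before Theorem \ref{thm: ParsevalframeS}, with $V(x)=S_{\mathbf{F},\Lambda}^{-1/2}F(x)$ and $\chi_x=\Lambda_x\pi_{F(x)}S_{\mathbf{F},\Lambda}^{-1/2}$. I would set $g:=S_{\mathbf{F},\Lambda}^{1/2}f$ and invoke Theorem \ref{thm: parsevalcgfframe} for $(V,\chi,\omega)$ at $g$; this produces a four-term identity, and the remaining task is to translate each piece back into the language of the original frame $(\mathbf{F},\Lambda,\omega)$.

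Two projection commutations underlie the whole reduction, both coming from Lemma \ref{lem: Projection-operator} applied with $T=S_{\mathbf{F},\Lambda}^{-1/2}$ (self-adjoint) and closed subspace $F(x)$: namely $\pi_{F(x)}S_{\mathbf{F},\Lambda}^{-1/2}=\pi_{F(x)}S_{\mathbf{F},\Lambda}^{-1/2}\pi_{V(x)}$ and, by taking adjoints, $\pi_{V(x)}S_{\mathbf{F},\Lambda}^{-1/2}\pi_{F(x)}=S_{\mathbf{F},\Lambda}^{-1/2}\pi_{F(x)}$. The first one collapses $\chi_x\pi_{V(x)}=\Lambda_x\pi_{F(x)}S_{\mathbf{F},\Lambda}^{-1/2}$, hence $\|\chi_x\pi_{V(x)}g\|^2=\|\Lambda_x\pi_{F(x)}f\|^2$, so the first terms on each side of Theorem \ref{thm: parsevalcgfframe} already match the first terms of the target identity.

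For the fusion-operator terms, combining the two commutations produces $\pi_{V(x)}\chi_x^*\chi_x\pi_{V(x)}=S_{\mathbf{F},\Lambda}^{-1/2}\pi_{F(x)}\Lambda_x^*\Lambda_x\pi_{F(x)}S_{\mathbf{F},\Lambda}^{-1/2}$. Integrating over $X_1$ yields $\mathcal{M}_{V,\chi}^{X_1}=S_{\mathbf{F},\Lambda}^{-1/2}\mathcal{M}_{\mathbf{F},\Lambda}^{X_1}S_{\mathbf{F},\Lambda}^{-1/2}$, so $\mathcal{M}_{V,\chi}^{X_1}g=S_{\mathbf{F},\Lambda}^{-1/2}\mathcal{M}_{\mathbf{F},\Lambda}^{X_1}f$ and $\|\mathcal{M}_{V,\chi}^{X_1}g\|^2=\langle S_{\mathbf{F},\Lambda}^{-1}\mathcal{M}_{\mathbf{F},\Lambda}^{X_1}f,\mathcal{M}_{\mathbf{F},\Lambda}^{X_1}f\rangle$. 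By the canonical dual identity \eqref{eq: operator inverse} applied to the vector $\mathcal{M}_{\mathbf{F},\Lambda}^{X_1}f$, this is exactly the second term on the left-hand side of the target identity; the symmetric computation handles $X_1^c$.

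The main obstacle will be the projection bookkeeping in the two steps above, since $\chi_x$, $V(x)$, and $\tilde{F}(x)$ all mix $S_{\mathbf{F},\Lambda}^{\pm 1/2}$ with various orthogonal projections, and one must check that the two inner $\pi_{V(x)}$'s surrounding $\chi_x^*\chi_x$ genuinely absorb into $S_{\mathbf{F},\Lambda}^{-1/2}$ on both sides without leaving a residual projection. Once the two Lemma \ref{lem: Projection-operator} identities are correctly propagated, the four-term identity of Theorem \ref{thm: parsevalcgfframe} applied to $(V,\chi,\omega)$ at $g=S_{\mathbf{F},\Lambda}^{1/2}f$ translates directly into the required equality.
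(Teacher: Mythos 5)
Your proposal is correct, but it takes a genuinely different route from the paper's. The paper argues purely algebraically: since $\mathcal{M}_{\mathbf{F},\Lambda}^{X_1}+\mathcal{M}_{\mathbf{F},\Lambda}^{X_1^c}=S_{\mathbf{F},\Lambda}$, the operators $T_1=S_{\mathbf{F},\Lambda}^{-1}\mathcal{M}_{\mathbf{F},\Lambda}^{X_1}$ and $T_2=S_{\mathbf{F},\Lambda}^{-1}\mathcal{M}_{\mathbf{F},\Lambda}^{X_1^c}$ sum to the identity, so $T_1-T_1^2=T_2-T_2^2$ (this is what Lemma \ref{lem: operatoroperation2} is invoked for); pairing this against $g=S_{\mathbf{F},\Lambda}f$ gives $\langle\mathcal{M}_{\mathbf{F},\Lambda}^{X_1}f,f\rangle-\langle S_{\mathbf{F},\Lambda}^{-1}\mathcal{M}_{\mathbf{F},\Lambda}^{X_1}f,\mathcal{M}_{\mathbf{F},\Lambda}^{X_1}f\rangle=\langle\mathcal{M}_{\mathbf{F},\Lambda}^{X_1^c}f,f\rangle-\langle S_{\mathbf{F},\Lambda}^{-1}\mathcal{M}_{\mathbf{F},\Lambda}^{X_1^c}f,\mathcal{M}_{\mathbf{F},\Lambda}^{X_1^c}f\rangle$, and \eqref{eq: operator inverse} converts each term. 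You instead transport the Parseval identity of Theorem \ref{thm: parsevalcgfframe} through the frame $(S_{\mathbf{F},\Lambda}^{-1/2}F(x),\Lambda_x\pi_{F(x)}S_{\mathbf{F},\Lambda}^{-1/2},\omega)$ evaluated at $S_{\mathbf{F},\Lambda}^{1/2}f$, exactly as the paper itself does for Theorem \ref{thm: ParsevalframeS}. The projection bookkeeping you flag as the main obstacle does close as you describe: $S_{\mathbf{F},\Lambda}^{-1/2}$ is boundedly invertible, so $S_{\mathbf{F},\Lambda}^{-1/2}F(x)$ is closed and Lemma \ref{lem: Projection-operator} gives $\chi_x\pi_{V(x)}=\Lambda_x\pi_{F(x)}S_{\mathbf{F},\Lambda}^{-1/2}$ and hence $\pi_{V(x)}\chi_x^*\chi_x\pi_{V(x)}=S_{\mathbf{F},\Lambda}^{-1/2}\pi_{F(x)}\Lambda_x^*\Lambda_x\pi_{F(x)}S_{\mathbf{F},\Lambda}^{-1/2}$ with no residual projections. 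Both arguments then land on the same scalar quantity $\langle S_{\mathbf{F},\Lambda}^{-1}\mathcal{M}_{\mathbf{F},\Lambda}^{X_1}f,\mathcal{M}_{\mathbf{F},\Lambda}^{X_1}f\rangle$ and finish with \eqref{eq: operator inverse}. The paper's route is shorter and avoids square roots entirely; yours buys reuse of the already-proved Parseval case and of the conjugation machinery. One caveat applies to both proofs equally: what actually comes out is the dual-frame integral $\int_X\omega^2(x)\Vert\tilde{\Lambda}_x\pi_{\tilde{F}(x)}\mathcal{M}_{\mathbf{F},\Lambda}^{X_1}f\Vert^2\,d\mu(x)$ over all of $X$, so matching the subscripts $X_1$ and $X_1^c$ printed on those integrals in the statement requires reading the $X_1$ in \eqref{eq: operator inverse} as $X$, a notational slip the paper makes in several places.
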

\begin{proof} Assume that  $S_{\mathbf{F},\Lambda}$ denote  the continuous generalized fusion frame for $(\Lambda, \ff,\omega)$, and by using the definition of $S_{\mathbf{F},\Lambda}$, it is clear that $\mathcal{M}_{\mathbf{F},\Lambda}^{X_1}+\mathcal{M}_{\mathbf{F},\Lambda}^{X_1^c}=S_{\mathbf{F},\Lambda} $. It follows that, $S_{\mathbf{F},\Lambda}^{-1}\mathcal{M}_{\mathbf{F},\Lambda}^{X_1 }+S_{\mathbf{F},\Lambda}^{-1} \mathcal{M}_{\mathbf{F},\Lambda}^{X_1^c}=Id_{\h}$. Hence, by applying Lemma \ref{lem: operatoroperation2} to the two operators $S_{\mathbf{F},\Lambda}^{-1}\mathcal{M}_{\mathbf{F},\Lambda}^{X_1 }$ and $S_{\mathbf{F},\Lambda}^{-1} \mathcal{M}_{\mathbf{F},\Lambda}^{X_1^c}$ yields that
$$
S_{\mathbf{F},\Lambda}^{-1} \mathcal{M}_{\mathbf{F},\Lambda}^{X_1}-S_{\mathbf{F},\Lambda}^{-1} \mathcal{M}_{\mathbf{F},\Lambda}^{X_1 }=\left(S_{\mathbf{F},\Lambda}^{-1} \mathcal{M}_{\mathbf{F},\Lambda}^{X_1}\right)^{2}-\left(S_{\mathbf{F},\Lambda}^{-1} \mathcal{M}_{\mathbf{F},\Lambda}^{X_1^c}\right)^{2} .
$$
Thus, for each $f, g \in \h$ we obtain
$$
\left\langle S_{\mathbf{F},\Lambda}^{-1} \mathcal{M}_{\mathbf{F},\Lambda}^{X_1} f, g\right\rangle-\left\langle S_{\mathbf{F},\Lambda}^{-1} \mathcal{M}_{\mathbf{F},\Lambda}^{X_1} S_{\mathbf{F},\Lambda}^{-1} \mathcal{M}_{\mathbf{F},\Lambda}^{X_1} f, g\right\rangle=\left\langle S_{\mathbf{F},\Lambda}^{-1} \mathcal{M}_{\mathbf{F},\Lambda}^{X_1^c} f, g\right\rangle-\left\langle S_{\mathbf{F},\Lambda}^{-1} \mathcal{M}_{\mathbf{F},\Lambda}^{X_1^c} S_{\mathbf{F},\Lambda}^{-1} \mathcal{M}_{\mathbf{F},\Lambda}^{X_1^{c}} f, g\right\rangle .
$$
one can choose $g$ to be $g=S_{\mathbf{F},\Lambda} f$, and we can get
$$
\left\langle \mathcal{M}_{\mathbf{F},\Lambda}^{X_1^c} f, f\right\rangle-\left\langle S_{\mathbf{F},\Lambda}^{-1} \mathcal{M}_{\mathbf{F},\Lambda}^{X_1} f, \mathcal{M}_{\mathbf{F},\Lambda} f\right\rangle=\left\langle \mathcal{M}_{\mathbf{F},\Lambda}^{X_1^c} f, f\right\rangle-\left\langle S_{\mathbf{F},\Lambda}^{-1} \mathcal{M}_{\mathbf{F},\Lambda}^{X_1^c} f, \mathcal{M}_{\mathbf{F},\Lambda}^{X_1^c} f\right\rangle .
$$
Finally, by \ref{eq: operator inverse}, the proof is complete.
\end{proof}
Observing that $(\frac{1}{\sqrt{\lambda}}\Lambda,\ff,\omega)$ is a  Parseval continuous generalized fusion frame if $(\Lambda, \ff,\omega)$ be a  $ \lambda$-tight continuous generalized fusion frame for $\h$. 
\begin{cor}\label{thm: tightcgfframe} Let $(\Lambda, \ff,\omega)$ be a  $ \lambda$-tight continuous generalized fusion frame for $\h$. Then for $X_1\subset X$ and $f\in \h$, the following conditions are fulfilled:
\begin{align}
0\leq \lambda \int _{X_1}\omega^2(x) \Vert \Lambda\pi_{F(x)}f\Vert^2d\mu(x) -\Vert \int _{X_1}\omega^2(x) \pi_{F(x)}\Lambda^*_x\Lambda_x  \pi_{F(x)}f d\mu(x)\Vert^2 \leq \frac{\lambda^2}{4}\Vert  f\Vert^2.
\end{align}

\begin{multline}
\frac{\lambda^2}{2}\Vert  f\Vert^2\leq \Vert \int _{X_1}\omega^2(x) \pi_{F(x)}\Lambda^*_x\Lambda_x  \pi_{F(x)}f d\mu(x)\Vert ^2d\mu(x)\Vert^2 \\- \Vert \int _{X_1^c}\omega^2(x) \pi_{F(x)}\Lambda^*_x\Lambda_x  \pi_{F(x)}f d\mu(x)\Vert^2  \leq  \frac{3\lambda^2}{2}\Vert  f\Vert^2.
\end{multline}
\begin{align}
\frac{3\lambda^2}{2}\Vert  f\Vert^2\leq \lambda \int _{X_1}\omega^2(x) \Vert \Lambda_x\pi_{F(x)}f\Vert^2d\mu(x)-\Vert \int _{X_1^c}\omega^2(x) \pi_{F(x)}\Lambda^*_x\Lambda_x  \pi_{F(x)}f d\mu(x)\Vert^2 \leq \lambda^2\Vert  f\Vert^2.
\end{align}
\end{cor}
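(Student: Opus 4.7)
The strategy is to reduce all three inequalities to the Parseval results already established. As noted in the observation preceding the statement, if $(\Lambda,\ff,\omega)$ is $\lambda$-tight then $\Lambda'_x := \tfrac{1}{\sqrt\lambda}\Lambda_x$ defines a Parseval continuous $g$-fusion frame $(\Lambda',\ff,\omega)$ with the same subspaces $F(x)$ and weight $\omega$; so I would apply Theorem~\ref{thm: parsevalcgfframe}, Corollary~\ref{cor: parsevalcgfframe}, and the intermediate corollary bounding $S_{\mathbf{F},\Lambda}^{X_1}-(S_{\mathbf{F},\Lambda}^{X_1})^2$ to $\Lambda'$ and then rescale.

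First I would record the scaling rules. Since $\Lambda'_x=\tfrac{1}{\sqrt\lambda}\Lambda_x$, one has $\|\Lambda'_x\pi_{F(x)}f\|^2=\tfrac{1}{\lambda}\|\Lambda_x\pi_{F(x)}f\|^2$ and $\pi_{F(x)}(\Lambda'_x)^*\Lambda'_x\pi_{F(x)}=\tfrac{1}{\lambda}\pi_{F(x)}\Lambda_x^*\Lambda_x\pi_{F(x)}$, whence the squared norm of the integral of the latter over $X_1$ (or $X_1^c$) picks up a factor of $\tfrac{1}{\lambda^2}$ relative to its unprimed counterpart. Note also that for a Parseval frame the canonical dual satisfies $\tilde\Lambda'=\Lambda'$ and $\tilde F=F$, so the operator $S_{\mathbf{F},\Lambda'}^{X_1}$ coincides with $\mathcal{M}_{\mathbf{F},\Lambda'}^{X_1}$; this identification is what lets the intermediate corollary be read off as a two-sided bound on the quadratic form appearing in the first inequality.

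With these rules recorded, each inequality follows by applying the appropriate Parseval statement to $\Lambda'$ and multiplying through by $\lambda^2$. For the first inequality, the operator bound $0\le S_{\mathbf{F},\Lambda'}^{X_1}-(S_{\mathbf{F},\Lambda'}^{X_1})^2\le \tfrac14\,\mathrm{Id}_\h$ tested against $f$ becomes $0\le \int_{X_1}\omega^2\|\Lambda'_x\pi_{F(x)}f\|^2\,d\mu-\|\int_{X_1}\omega^2\pi_{F(x)}(\Lambda'_x)^*\Lambda'_x\pi_{F(x)}f\,d\mu\|^2\le \tfrac14\|f\|^2$, and rescaling by $\lambda^2$ produces the stated estimate $\tfrac{\lambda^2}{4}\|f\|^2$. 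For the second inequality, the first line of Corollary~\ref{cor: parsevalcgfframe} applied to $\Lambda'$ has \emph{both} squared-norm terms carrying the factor $\tfrac{1}{\lambda^2}$, so multiplication by $\lambda^2$ converts the Parseval bounds $\tfrac12\|f\|^2$ and $\tfrac32\|f\|^2$ into $\tfrac{\lambda^2}{2}\|f\|^2$ and $\tfrac{3\lambda^2}{2}\|f\|^2$. For the third inequality, the second line of Corollary~\ref{cor: parsevalcgfframe} has the integral of $\|\Lambda'_x\pi_{F(x)}f\|^2$ scaling as $\tfrac{1}{\lambda}$ and the squared-norm term as $\tfrac{1}{\lambda^2}$; multiplication by $\lambda^2$ therefore puts a factor $\lambda$ on the first integral and yields the stated bounds.

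There is no real conceptual obstacle here: the argument is a pure rescaling and everything falls out mechanically once the scaling of $\Lambda'_x$ is tracked. The one point that deserves a sentence of justification is the identification $S_{\mathbf{F},\Lambda'}^{X_1}=\mathcal{M}_{\mathbf{F},\Lambda'}^{X_1}$ for the Parseval frame $\Lambda'$, which is immediate because the canonical dual of a Parseval frame is itself. Care must also be taken to keep $X_1$ and $X_1^c$ straight, but since $X_1\subset X$ is arbitrary in each cited statement, the two indices can be interchanged freely.
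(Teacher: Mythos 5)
Your rescaling argument is exactly what the paper intends: the corollary is stated without proof, immediately following the observation that $(\tfrac{1}{\sqrt{\lambda}}\Lambda,\ff,\omega)$ is Parseval, and your bookkeeping of the factors $\tfrac{1}{\lambda}$ on $\Vert\Lambda'_x\pi_{F(x)}f\Vert^2$ and $\tfrac{1}{\lambda^2}$ on the squared norms of the integrated operators, together with the identification $S_{\mathbf{F},\Lambda'}^{X_1}=\mathcal{M}_{\mathbf{F},\Lambda'}^{X_1}$ in the Parseval case, is correct. One caveat: in the third display your computation actually yields the lower bound $\tfrac{3\lambda^2}{4}\Vert f\Vert^2$, not the printed $\tfrac{3\lambda^2}{2}\Vert f\Vert^2$ (which cannot be right, as it exceeds the printed upper bound $\lambda^2\Vert f\Vert^2$); this is evidently a typo in the statement, so you should say your argument proves the corrected bound rather than claiming it "yields the stated bounds."
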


Later we will discuss equality for tight continuous generalized fusion frames. With the purpose of doing this, first we define two operators $S_{\mathbf{F},\Lambda}^{1}$, $S_{\mathbf{F},\Lambda}^{2}$
\begin{eqnarray*}
S_{\mathbf{F},\Lambda}^{1}:\,\, \h \longrightarrow \h, \quad S_{\mathbf{F},\Lambda}^{1}f&=&\int _{X}a_x\omega^2(x) \pi_{F(x)}\Lambda^*_x\Lambda_x  \pi_{F(x)}(f) d\mu(x),\quad f\in\h.\\
S_{\mathbf{F},\Lambda}^{2}:\,\, \h \longrightarrow \h, \quad S_{\mathbf{F},\Lambda}^{2}f&=&\int _{X}(1-a_x)\omega^2(x) \pi_{F(x)}\Lambda^*_x\Lambda_x  \pi_{F(x)}(f) d\mu(x),\quad f\in\h.
\end{eqnarray*}
Where $(\Lambda, \ff,\omega)$ is a Bessel continuous generalized fusion frame for $\h$ and $\{a_x:\quad x\in X\}\in l^{\infty}(X)$, such that $ l^{\infty}(X)=\{\{ a_x:\quad x\in X\}:\quad\sup_{x\in X}\vert a_x\vert <\infty \}$.
\begin{prop}\label{prop: abounded linear operator} Let $(\Lambda, \ff,\omega)$ be a continuous $g$-fusion frame for $\h$ with bound $B$, then $S_{\mathbf{F},\Lambda}^{1}$, $S_{\mathbf{F},\Lambda}^{2}$ are bounded linear operators, and  
\begin{eqnarray}
 (S_{\mathbf{F},\Lambda}^{1})^*f &=&\int _{X}\overline{a}_x\omega^2(x) \pi_{F(x)}\Lambda^*_x\Lambda_x  \pi_{F(x)}(f) d\mu(x),\quad f\in\h.\\
 (S_{\mathbf{F},\Lambda}^{2})^*f &=&\int _{X}(1-\overline{a}_x)\omega^2(x) \pi_{F(x)}\Lambda^*_x\Lambda_x  \pi_{F(x)}(f) d\mu(x),\quad f\in\h.
\end{eqnarray}
\end{prop}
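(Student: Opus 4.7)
The plan is to verify the three structural properties (linearity, well-definedness plus boundedness, and the adjoint formula) for $S_{\mathbf{F},\Lambda}^{1}$, and then derive the corresponding statements for $S_{\mathbf{F},\Lambda}^{2}$ with minimal extra work by observing that $\{1-a_x\}_{x \in X}$ belongs to $\ell^{\infty}(X)$ whenever $\{a_x\}$ does (equivalently, by writing $S_{\mathbf{F},\Lambda}^{2} = S_{\mathbf{F},\Lambda} - S_{\mathbf{F},\Lambda}^{1}$).

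First I would handle linearity, which is immediate: for each fixed $x \in X$ the map $f \mapsto a_x\omega^2(x)\pi_{F(x)}\Lambda_x^*\Lambda_x\pi_{F(x)}(f)$ is linear in $f$, so linearity of $S_{\mathbf{F},\Lambda}^{1}$ follows from the linearity of the weak integral. For well-definedness and boundedness, set $M := \sup_{x\in X}|a_x| < \infty$ and pick $f,g \in \h$. I would write the pairing in the weak form
\begin{align*}
\bigl|\langle S_{\mathbf{F},\Lambda}^{1} f, g\rangle\bigr|
&= \Bigl|\int_X a_x\,\omega^2(x)\,\langle \Lambda_x\pi_{F(x)}f,\Lambda_x\pi_{F(x)}g\rangle\,d\mu(x)\Bigr|\\
&\le M\int_X \omega^2(x)\,\|\Lambda_x\pi_{F(x)}f\|\,\|\Lambda_x\pi_{F(x)}g\|\,d\mu(x),
\end{align*}
and then apply the Cauchy--Schwarz inequality to the integral together with the Bessel upper bound $B$ of $(\Lambda,\ff,\omega)$ to conclude $|\langle S_{\mathbf{F},\Lambda}^{1}f,g\rangle| \le MB\,\|f\|\,\|g\|$. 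Taking the supremum over $\|g\|\le 1$ yields $\|S_{\mathbf{F},\Lambda}^{1}\| \le MB$, which simultaneously ensures the weak integral defines an element of $\h$ and shows boundedness.

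For the adjoint formula, I would fix $f,g \in \h$ and use self-adjointness of $\pi_{F(x)}\Lambda_x^*\Lambda_x\pi_{F(x)}$ together with the conjugate-linearity of the inner product in the second slot to move the scalar $a_x$ across:
\begin{align*}
\langle S_{\mathbf{F},\Lambda}^{1} f, g\rangle
&= \int_X a_x\,\omega^2(x)\,\langle \pi_{F(x)}\Lambda_x^*\Lambda_x\pi_{F(x)}f,\,g\rangle\,d\mu(x)\\
&= \int_X \bigl\langle f,\,\overline{a_x}\,\omega^2(x)\,\pi_{F(x)}\Lambda_x^*\Lambda_x\pi_{F(x)}g\bigr\rangle\,d\mu(x)\\
&= \Bigl\langle f,\,\int_X \overline{a_x}\,\omega^2(x)\,\pi_{F(x)}\Lambda_x^*\Lambda_x\pi_{F(x)}g\,d\mu(x)\Bigr\rangle.
\end{align*}
By uniqueness of the adjoint this identifies $(S_{\mathbf{F},\Lambda}^{1})^*g$ with the asserted weak integral; the same estimate applied to $\{\overline{a_x}\}$ ensures this integral is itself a bounded linear expression in $g$.

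Finally, since $\{1-a_x\}_{x\in X}\in \ell^{\infty}(X)$, the argument applied verbatim to this bounded family gives boundedness and the adjoint formula for $S_{\mathbf{F},\Lambda}^{2}$; alternatively one can just note $S_{\mathbf{F},\Lambda}^{2}=S_{\mathbf{F},\Lambda}-S_{\mathbf{F},\Lambda}^{1}$ with $S_{\mathbf{F},\Lambda}$ bounded and self-adjoint and take adjoints termwise. The only delicate point, and the step I would be most careful about, is the interchange of the inner product with the weak integral and the passage from the weak bound $|\langle S_{\mathbf{F},\Lambda}^{1}f,g\rangle|\le MB\|f\|\|g\|$ to the existence of $S_{\mathbf{F},\Lambda}^{1}f$ as a vector in $\h$; this is legitimate precisely because of the weak measurability hypothesis built into the definition of a continuous $g$-fusion frame and the representation of $S_{\mathbf{F},\Lambda}^{1}$ through a bounded sesquilinear form on $\h\times\h$, which by the Riesz representation theorem corresponds to a unique bounded operator.
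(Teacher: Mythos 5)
Your argument is correct and follows essentially the same route as the paper: boundedness via the weak (sesquilinear) formulation, Cauchy--Schwarz, and the Bessel bound $B$ together with $M=\sup_x|a_x|$, giving $\|S_{\mathbf{F},\Lambda}^{1}\|\le BM$, and the adjoint formula by moving $\overline{a}_x$ across the inner product, with $S_{\mathbf{F},\Lambda}^{2}$ handled by the same estimate applied to $\{1-a_x\}$. Your explicit appeal to the Riesz representation of the bounded form to justify existence of the weak integral is a slightly more careful phrasing of the paper's supremum-over-$\|g\|=1$ computation, but it is not a different proof.
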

\begin{proof}
For $f\in\h$ and $X_1\subset X$, we have 
$$
\begin{aligned}
\left\|\int _{X}a_x\omega^2(x) \pi_{F(x)}\Lambda^*_x\Lambda_x  \pi_{F(x)}(f) d\mu(x)\right\|=& \sup _{g \in H,\|g\|=1}\left|\left\langle\int _{X}a_x\omega^2(x) \pi_{F(x)}\Lambda^*_x\Lambda_x  \pi_{F(x)}(f) d\mu(x), g\right\rangle\right| \\
=& \sup _{g \in H,\|g\|=1}\left|\int_{X_{1}} \omega^2(x)\left\langle\Lambda_x  \pi_{F(x)} f, \bar{a}_{x}\Lambda_x  \pi_{F(x)} g\right\rangle d \mu(x)\right| \\
\leq & \sup _{g \in H,\|g\|=1}\left(\int_{X_{1}}  \omega^2(x)\left\|\Lambda_x  \pi_{F(x)}(f)\right\|^{2} d \mu(x)\right)^{\frac{1}{2}} \\
& \times\left(\int_{X_{1}} v^{2}(x)\left\|\bar{a}_{x} \Lambda_x  \pi_{F(x)}(g)\right\|^{2} d \mu(x)\right)^{\frac{1}{2}} \\
\leq & B M_a\|f\|,
\end{aligned}
$$
where $M _a=\sup _{x \in X}\left|a_{x}\right|$ and $\bar{a}_{x}$ is the conjugate of $a_{x}$. which implies that $S_{\mathbf{F},\Lambda}^{1}$ is well-defined and $\left\|S_{\mathbf{F},\Lambda}^{1} f\right\| \leq B M_a\|f\|$. Therefore, $S_{\mathbf{F},\Lambda}^{1}$ is a bounded linear operator. Now let us compute its adjoint,
$$
\begin{aligned}
\left\langle f,\left(S_{\mathbf{F},\Lambda}^{1}\right)^{*}(g)\right\rangle=\left\langle S_{\mathbf{F},\Lambda}^{1} f, g\right\rangle &=\left\langle\int_{X} a_{x} \omega^2(x) \Lambda_x  \pi_{F(x)} f d \mu(x), g\right\rangle \\
&=\left\langle  f, \int _{X}\overline{a}_x \omega^2(x) \pi_{F(x)}\Lambda^*_x\Lambda_x  \pi_{F(x)}(g) d\mu(x)\right\rangle d \mu(x). \\
\end{aligned}
$$
Similarly, we can show that $S_{\mathbf{F},\Lambda}^{2}$ is a bounded linear operator and its adjoint is:
\begin{align*}
(S_{\mathbf{F},\Lambda}^{2})^*f =\int _{X}(1-\overline{a}_x)\omega^2(x) \pi_{F(x)}\Lambda^*_x\Lambda_x  \pi_{F(x)}(f) d\mu(x),\quad f\in\h.
\end{align*} 
\end{proof}
\begin{thm} Let $(\Lambda, \ff,\omega)$ be a $\lambda$-tight continuous generalized fusion frame for $\h$. Then for $f\in \h$ and $\{a_x:\quad x\in X\}\in l^{\infty}(X)$, we have 
	\begin{eqnarray*}
 \lambda\int _{X}a_x\omega^2(x) \Vert \Lambda\pi_{F(x)}(f)\Vert^2d\mu(x)-\Vert \int _{X}(1-a_x)\omega^2(x) \pi_{F(x)}\Lambda^*_x\Lambda_x  \pi_{F(x)}(f) d\mu(x)\Vert^2 \\
	= \lambda\int _{X}(1-\overline{a}_x)\omega^2(x) \Vert \Lambda\pi_{F(x)}(f)\Vert^2d\mu(x)-\Vert \int _{X}a_x\omega^2(x) \pi_{F(x)}\Lambda^*_x\Lambda_x  \pi_{F(x)}(f) d\mu(x)\Vert^2.
\end{eqnarray*}
Where $\overline{a}_x$ is the conjugate of $a_x$.
\end{thm}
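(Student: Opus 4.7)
The plan is operator-theoretic. Introduce the shorthand $T := S_{\mathbf{F},\Lambda}^{1}$ and $U := S_{\mathbf{F},\Lambda}^{2}$; Proposition~\ref{prop: abounded linear operator} identifies their adjoints as the integrals obtained from the definitions of $T$ and $U$ by replacing $a_x$ and $1-a_x$ with $\overline{a}_x$ and $1-\overline{a}_x$, respectively. The key structural input is $\lambda$-tightness, which yields
\[
T + U \;=\; S_{\mathbf{F},\Lambda} \;=\; \lambda\, Id_{\h},
\]
so that $U = \lambda Id_{\h}-T$ and, taking adjoints, $U^{*} = \lambda Id_{\h} - T^{*}$. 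These two relations are the only facts about the frame I will need.

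Next I would recast the desired identity in purely operator form. Since $\Vert \Lambda_{x}\pi_{F(x)} f\Vert^{2}=\langle \pi_{F(x)}\Lambda_{x}^{*}\Lambda_{x}\pi_{F(x)} f,f\rangle$, integrating against the complex weights shows that the two $\lambda$-scaled integral terms coincide with $\lambda\langle T f, f\rangle$ on the left and $\lambda\langle U^{*} f, f\rangle$ on the right, while each norm-squared equals $\langle (S_{\mathbf{F},\Lambda}^{i})^{*} S_{\mathbf{F},\Lambda}^{i} f,f\rangle$. The equality claimed in the theorem then reduces to the operator identity
\[
\lambda\, T - T^{*}T \;=\; \lambda\, U^{*} - U^{*}U,
\]
which I would verify by direct substitution: replace $U^{*} = \lambda Id_{\h}-T^{*}$, expand $U^{*}U = \lambda^{2} Id_{\h}-\lambda T-\lambda T^{*}+T^{*}T$, and observe that the $\lambda^{2} Id_{\h}$, $\lambda T^{*}$ and $T^{*}T$ contributions cancel precisely, leaving $\lambda T - T^{*}T$ on both sides.

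The main obstacle is bookkeeping with the complex scalars $\{a_x\}$: because $T$ and $U$ are not self-adjoint in general, one must carefully distinguish $\langle T f, f\rangle$ (integrand $a_x$) from $\langle T^{*}f, f\rangle=\overline{\langle T f, f\rangle}$ (integrand $\overline{a}_x$), and similarly for $U$ versus $U^{*}$. This is precisely why the conjugate $\overline{a}_x$ appears in the first integral of the right-hand side rather than $a_x$. Once Proposition~\ref{prop: abounded linear operator} has been invoked to keep the adjoint pairing straight, no appeal to Lemma~\ref{lem: operatoroperation1} or Lemma~\ref{lem: operatoroperation2} is required; the $\lambda$-tightness hypothesis is doing all the work by collapsing $U$ to $\lambda Id_{\h}-T$, thereby turning the proof into one short algebraic cancellation.
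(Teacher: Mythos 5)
Your argument is, in substance, the paper's own proof: the paper also works with $S_{\mathbf{F},\Lambda}^{1}$ and $S_{\mathbf{F},\Lambda}^{2}$, uses $\lambda$-tightness to get $S_{\mathbf{F},\Lambda}^{1}+S_{\mathbf{F},\Lambda}^{2}=\lambda\,Id_{\h}$, and performs the same cancellation (phrased there as $Q_1+Q_2^*Q_2=Q_2^*+Q_1^*Q_1$ for $Q_i=\lambda^{-1}S_{\mathbf{F},\Lambda}^{i}$, which after rescaling is exactly your $\lambda T-T^*T=\lambda U^*-U^*U$). Your algebra is correct. The one point to flag is your claim that ``the equality claimed in the theorem reduces to'' this operator identity: it does not, as printed. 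In the printed statement the norm $\Vert\int_X(1-a_x)\cdots\Vert^2=\Vert Uf\Vert^2$ sits on the \emph{left} and $\Vert\int_X a_x\cdots\Vert^2=\Vert Tf\Vert^2$ on the \emph{right}, so the literal statement reduces to $\lambda T-U^*U=\lambda U^*-T^*T$, which is false in general (take $\h=\mathbb{C}$, $\lambda=1$, constant weight $a_x=t$: it forces $t+\overline{t}=1$). What you prove is the version with the two norms interchanged, i.e.
\begin{equation*}
\lambda\int_X a_x\omega^2(x)\Vert\Lambda_x\pi_{F(x)}f\Vert^2 d\mu(x)-\Big\Vert\int_X a_x\omega^2(x)\pi_{F(x)}\Lambda_x^*\Lambda_x\pi_{F(x)}f\,d\mu(x)\Big\Vert^2
=\lambda\langle (S_{\mathbf{F},\Lambda}^{2})^*f,f\rangle-\Vert S_{\mathbf{F},\Lambda}^{2}f\Vert^2 ,
\end{equation*}
and this is precisely what the paper's proof establishes as well (its final line reads $\lambda\langle S^{1}h,h\rangle+\Vert S^{2}h\Vert^2=\langle h,S^{2}h\rangle+\Vert S^{1}h\Vert^2$). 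So the mismatch is a typo in the theorem statement rather than a gap in your reasoning, but you should state explicitly which pairing of the norm terms you are actually proving instead of asserting equivalence with the printed formula.
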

\begin{proof}
Since  $(\Lambda, \ff,\omega)$ is a $\lambda$-tight continuous generalized fusion frame for $\h$, and by using Proposition \ref{prop: abounded linear operator}. In particular,  for each $f\in\h$, we have \begin{align*}
S_{\mathbf{F},\Lambda}^{1}f+S_{\mathbf{F},\Lambda}^{2}f=\int _{X} \omega^2(x) \pi_{F(x)}\Lambda^*_x\Lambda_x  \pi_{F(x)}(f) d\mu(x).
\end{align*} so $\lambda^{-1} S_{F}^{1}+\lambda^{-1} S_{\mathbf{F},\Lambda}^{2}=Id_{\h}$. Now if we suppose that $Q_{1}=\lambda^{-1} S_{\mathbf{F},\Lambda}^{1}$ and $Q_{2}=\lambda^{-1} S_{F}^{2}$, then, we have
$$
\begin{aligned}
Q_{1}+Q_{2}^{*} Q_{2} &=Q_{1}+\left(I_{H}-Q_{1}\right)^{*}\left(I_{H}-Q_{1}\right) \\
&=Q_{1}+\left(I_{H}-Q_{1}^{*}\right)\left(I_{H}-Q_{1}\right) \\
&=Q_{1}+I_{H}-Q_{1}-Q_{1}^{*}+Q_{1}^{*} Q_{1} \\
&=I_{H}-Q_{1}^{*}+Q_{1}^{*} Q_{1} \\
&=Q_{2}^{*}+Q_{1}^{*} Q_{1}
\end{aligned}
$$
and thus
$$
\lambda S_{\mathbf{F},\Lambda}^{1}+\left(S_{\mathbf{F},\Lambda}^{2}\right)^{*} S_{\mathbf{F},\Lambda}^{2}=\lambda S_{\mathbf{F},\Lambda}^{2}+\left(S_{\mathbf{F},\Lambda}^{1}\right)^{*} S_{\mathbf{F},\Lambda}^{1}.
$$
Hence for $h \in \h$, we get
$$
\begin{aligned}
\lambda & \int_{X} a_{x} v^{2}(x)\left\|\pi_{F(x)}(h)\right\|^{2} d \mu(x)+\left\|\int_{X}\left(1-a_{x}\right) v^{2}(x) \pi_{F(x)}(h) d \mu(x)\right\|^{2} \\
&=\left\langle\lambda S_{\mathbf{F},\Lambda}^{1} h, h\right\rangle+\left\langle\left(S_{\mathbf{F},\Lambda}^{2}\right)^{*} S_{\mathbf{F},\Lambda}^{2} h, h\right\rangle \\
&=\left\langle\left(\lambda S_{\mathbf{F},\Lambda}^{1}+\left(S_{\mathbf{F},\Lambda}^{2}\right)^{*} S_{\mathbf{F},\Lambda}^{2}\right) h, h\right\rangle \\
&=\left\langle\left(\lambda S_{\mathbf{F},\Lambda}^{2}+\left(S_{\mathbf{F},\Lambda}^{1}\right)^{*} S_{\mathbf{F},\Lambda}^{1}\right) h, h\right\rangle \\
&=\left\langle\lambda\left(S_{\mathbf{F},\Lambda}^{2}\right)^{*} h, h\right\rangle+\left\langle\left(S_{\mathbf{F},\Lambda}^{1}\right)^{*} S_{\mathbf{F},\Lambda}^{1} h, h\right\rangle \\
&=\left\langle h, S_{\mathbf{F},\Lambda}^{2} h\right\rangle+\left\|S_{\mathbf{F},\Lambda}^{1} h\right\|^{2}
\end{aligned}
$$
\end{proof}
Furthermore, by using Theorem \ref{thm: parsevalcgfframe} and Theorem  \ref{thm: tightcgfframe} we immediately obtain the following result:

\begin{cor}Let $(\Lambda, \ff,\omega)$ be a  $ \lambda$-tight continuous generalized fusion frame for $\h$. Then for $f\in \h$ and $\{a_x:\quad x\in X\}\in l^{\infty}(X)$, we have 
		\begin{eqnarray*}
			\lambda\int _{X}a_x\omega^2(x) \Vert \Lambda\pi_{F(x)}(f)\Vert^2d\mu(x)-\Vert \int _{X}(1-a_x)\omega^2(x) \pi_{F(x)}\Lambda^*_x\Lambda_x  \pi_{F(x)}(f) d\mu(x)\Vert^2 \\
			= \lambda\int _{X}(1-a_x)\omega^2(x) \Vert \Lambda\pi_{F(x)}(f)\Vert^2d\mu(x)-\Vert \int _{X}a_x\omega^2(x) \pi_{F(x)}\Lambda^*_x\Lambda_x  \pi_{F(x)}(f) d\mu(x)\Vert^2 \geq\frac{3}{4}\Vert  f\Vert^2.
		\end{eqnarray*}
\end{cor}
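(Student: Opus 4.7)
The proof naturally combines the preceding theorem (which yields the equality between the two sides) with the Parseval bound from Theorem \ref{thm: parsevalcgfframe} (which yields the lower bound $\tfrac{3}{4}\|f\|^{2}$), matching the author's stated strategy ``by using Theorem \ref{thm: parsevalcgfframe} and Theorem \ref{thm: tightcgfframe}''.

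For the equality, the identity in the preceding theorem has $\overline{a_x}$ on the right while the corollary has $a_x$; I would therefore restrict to real-valued $\{a_{x}\}_{x\in X}\in l^{\infty}(X)$, so that $\overline{a_x}=a_x$ and the two forms coincide, making the equality immediate.

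For the lower bound, the cleanest route is to rescale to a Parseval frame via $\bigl(\tfrac{1}{\sqrt{\lambda}}\Lambda,\ff,\omega\bigr)$, as noted in the sentence preceding Corollary \ref{thm: tightcgfframe}. Applying the equality to this rescaled frame reduces the problem to the Parseval case, where the ``Moreover'' inequality of Theorem \ref{thm: parsevalcgfframe} provides exactly the constant $\tfrac{3}{4}\|f\|^{2}$ via Lemma \ref{lem: operatoroperation1}(i) with $a=1$, $b=-1$, $c=1$. More intrinsically, setting $Q:=\lambda^{-1}S^{1}_{\mathbf{F},\Lambda}$ (self-adjoint, with $Q+\lambda^{-1}S^{2}_{\mathbf{F},\Lambda}=Id_{\h}$ by tightness) and rewriting the common value of both sides of the stated equality as $\langle Tf,f\rangle$ for $T$ a quadratic polynomial in $Q$ puts everything into the form required by Lemma \ref{lem: operatoroperation1}.

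The main obstacle I anticipate is the bookkeeping required to track the factors of $\lambda$ so that the universal constant $\tfrac{3}{4}$, rather than a $\lambda$-dependent quantity, emerges; verifying that the $a_x$-weighted integrals reduce correctly to the $\chi_{X_{1}}$-based operators used in Theorem \ref{thm: parsevalcgfframe} is also a nontrivial consistency check, since only the algebraic identity $S^{1}_{\mathbf{F},\Lambda}+S^{2}_{\mathbf{F},\Lambda}=\lambda\,Id_{\h}$ and the self-adjointness of $S^{1}_{\mathbf{F},\Lambda}$, $S^{2}_{\mathbf{F},\Lambda}$ (which uses the real-valuedness of $a_x$) should survive the translation.
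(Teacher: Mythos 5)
Your strategy is the one the paper itself intends: the text offers no proof of this corollary at all, merely asserting that it follows ``immediately'' from Theorem \ref{thm: parsevalcgfframe} and Corollary \ref{thm: tightcgfframe}, and your plan (equality from the preceding theorem with $a_x$ real, lower bound from Lemma \ref{lem: operatoroperation1} applied to $Q_1:=\lambda^{-1}S^{1}_{\mathbf{F},\Lambda}$, or equivalently by rescaling to the Parseval frame $(\tfrac{1}{\sqrt{\lambda}}\Lambda,\ff,\omega)$) is exactly that derivation. The restriction to real-valued $a_x$, so that $S^{1}_{\mathbf{F},\Lambda}$ and $S^{2}_{\mathbf{F},\Lambda}$ are self-adjoint and $\overline{a}_x=a_x$, is a necessary and correct observation.

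However, you defer the decisive computation, and it does not close as you hope; the ``bookkeeping'' obstacle you yourself flag is precisely where the statement breaks. Put $Q_1=\lambda^{-1}S^{1}_{\mathbf{F},\Lambda}$ and $Q_2=\lambda^{-1}S^{2}_{\mathbf{F},\Lambda}$, so that $Q_1+Q_2=Id_{\h}$ and both are self-adjoint. The left-hand side as printed equals $\lambda\langle S^{1}_{\mathbf{F},\Lambda}f,f\rangle-\Vert S^{2}_{\mathbf{F},\Lambda}f\Vert^2=\lambda^{2}\langle (Q_1-Q_2^{2})f,f\rangle=\lambda^{2}\langle (Id_{\h}-Q_2-Q_2^{2})f,f\rangle$, to which Lemma \ref{lem: operatoroperation1}(ii) (with $a=-1$, $b=-1$, $c=1$) gives only the \emph{upper} bound $\tfrac{5\lambda^{2}}{4}\Vert f\Vert^{2}$ and no positive lower bound; concretely, $a_x\equiv\tfrac12$ gives $Q_1=Q_2=\tfrac12 Id_{\h}$ and the value $\tfrac{\lambda^{2}}{4}\Vert f\Vert^{2}$, which is strictly less than $\tfrac34\Vert f\Vert^{2}$ already for $\lambda=1$. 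What actually follows is the plus-sign version, consistent with the operator identity $\lambda S^{1}_{\mathbf{F},\Lambda}+(S^{2}_{\mathbf{F},\Lambda})^{*}S^{2}_{\mathbf{F},\Lambda}=\lambda S^{2}_{\mathbf{F},\Lambda}+(S^{1}_{\mathbf{F},\Lambda})^{*}S^{1}_{\mathbf{F},\Lambda}$ established in the proof of the preceding theorem: namely $\lambda\langle S^{1}_{\mathbf{F},\Lambda}f,f\rangle+\Vert S^{2}_{\mathbf{F},\Lambda}f\Vert^{2}=\lambda^{2}\langle (Q_2^{2}-Q_2+Id_{\h})f,f\rangle\ge\tfrac{3\lambda^{2}}{4}\Vert f\Vert^{2}$ by Lemma \ref{lem: operatoroperation1}(i) with $a=1$, $b=-1$, $c=1$ --- and the constant is $\tfrac{3\lambda^{2}}{4}$, not $\tfrac34$. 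Your rescaling route gives the same answer: the Parseval bound $\tfrac34\Vert f\Vert^{2}$ for $\tfrac{1}{\sqrt{\lambda}}\Lambda$ picks up a factor $\lambda^{2}$ on translating back. So the plan is the right one, but to turn it into a proof you must correct both the signs and the constant; as literally stated the corollary is false, and no completion of your outline can establish it.
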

\section{Inequalities-Equalities for continuous generalized fusion pairs }
Applying Lemma \ref{lem: operatoroperation1}, we have 
\begin{align*}
\pi_{F(x)}S_{\mathbf{F},\Lambda}^{-1}= \pi_{F(x)} S_{\mathbf{F},\Lambda}^{-1}\pi_{ S_{\mathbf{F},\Lambda}^{-1}F(x)},
\end{align*}
implies that 
\begin{align*}
S_{\mathbf{F},\Lambda}^{-1}\pi_{F(x)}=\pi_{ S_{\mathbf{F},\Lambda}^{-1}F(x)}S_{\mathbf{F},\Lambda}^{-1} \pi_{F(x)}.
\end{align*}
Morever, \ref{eq: reconstruction formula} also can be rewritten as 
\begin{equation} \label{eq: reconstruction formulaaltenate1}
f=\int _{X}\omega^2(x) \pi_{S_{\mathbf{F},\Lambda}^{-1}F(x)} ( S_{\mathbf{F},\Lambda}^{-1}  \pi_{ F(x)}\Lambda_{x}^{*}  S_{\mathbf{F},\Lambda}) S_{\mathbf{F},\Lambda}^{-1}  \Lambda_{x} \pi_{ F(x)}fd\mu(x),\quad f\in \h.
\end{equation}
Now, we can introduce the following definition. 
\begin{defi} Let $\mathcal{V}=(\Lambda, \ff,\omega)$ be a continuous generalized fusion frame with bounds $ A$, $B$ and let $S_{\mathbf{F},\Lambda}$ be the frame operator. We consider also $\mathcal{W}=(\Gamma, \mathbf{G},\nu)$ a Bessel continuous generalized fusion mapping. We say that $\mathcal{W}$ is alternate dual of $\mathcal{V}$ if we have 
\begin{equation} \label{eq: reconstruction formulaaltenate2}
f=\int _{X}\omega(x)\nu(x)\pi_{G(x)} \Gamma_x^* S_{\mathbf{F},\Lambda}^{-1}\Lambda_{x}  \pi_{ F(x)} f d\mu(x),\quad f\in \h.
\end{equation}
\end{defi}
\begin{prop} The altenate dual of continuous generalized fusion frame of $\mathcal{V}$ is a continuous generalized fusion frame.
\end{prop}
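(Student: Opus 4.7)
The hypothesis already supplies the upper Bessel bound for $\mathcal{W}$, so the only piece missing from the definition of a continuous generalized fusion frame is a positive lower frame bound. The plan is to extract that bound directly from the alternate-dual reconstruction identity for $\mathcal{W}$ by pairing it with $f$, applying Cauchy--Schwarz, and using the known frame bounds $A,B$ of $\mathcal{V}$.

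Concretely, I fix $f\in\h$ and take the inner product of both sides of the reconstruction formula with $f$. Moving the self-adjoint projection $\pi_{G(x)}$ and the adjoint $\Gamma_x^{*}$ across the inner product, and absorbing the bounded, positive, invertible, self-adjoint operator $S_{\mathbf{F},\Lambda}^{-1}$ onto the other side, this rewrites as an identity of the form
\begin{equation*}
\|f\|^{2}=\int_{X}\omega(x)\nu(x)\bigl\langle \Lambda_x\pi_{F(x)}S_{\mathbf{F},\Lambda}^{-1}f,\;\Gamma_x\pi_{G(x)}f\bigr\rangle\,d\mu(x).
\end{equation*}
Two successive applications of Cauchy--Schwarz (pointwise in $x$, then in $L^{2}(X,d\mu)$) give
\begin{equation*}
\|f\|^{2}\le\Bigl(\int_{X}\omega^{2}(x)\|\Lambda_x\pi_{F(x)}S_{\mathbf{F},\Lambda}^{-1}f\|^{2}d\mu(x)\Bigr)^{1/2}\Bigl(\int_{X}\nu^{2}(x)\|\Gamma_x\pi_{G(x)}f\|^{2}d\mu(x)\Bigr)^{1/2}.
\end{equation*}

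The upper frame bound $B$ of $\mathcal{V}$, together with the estimate $\|S_{\mathbf{F},\Lambda}^{-1}\|\le A^{-1}$, controls the first factor by $\sqrt{B}\,A^{-1}\|f\|$; squaring and rearranging then delivers the lower bound
\begin{equation*}
\frac{A^{2}}{B}\,\|f\|^{2}\le \int_{X}\nu^{2}(x)\|\Gamma_x\pi_{G(x)}f\|^{2}d\mu(x).
\end{equation*}
Paired with the Bessel hypothesis on $\mathcal{W}$, this is precisely the two-sided inequality required for Definition 2.4, so $\mathcal{W}$ is a continuous $g$-fusion frame with bounds $A^{2}B^{-1}$ and $B_{\mathcal{W}}$. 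Measurability of $x\mapsto\pi_{G(x)}f$ is already built into the Bessel-mapping hypothesis on $\mathcal{W}$, so nothing further needs to be verified.

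The only obstacle worth flagging is bookkeeping around $S_{\mathbf{F},\Lambda}^{-1}$ in the reconstruction formula: because $A\,Id_{\h}\le S_{\mathbf{F},\Lambda}\le B\,Id_{\h}$, moving $S_{\mathbf{F},\Lambda}^{-1}$ across an inner product and bounding its norm only introduces multiplicative factors of $A^{\pm 1}$ and $B^{\pm 1}$ in the final constants, and nothing deeper than Cauchy--Schwarz is needed to close the argument.
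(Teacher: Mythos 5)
Your proof is correct and follows essentially the same route as the paper's: pair the alternate-dual reconstruction formula with $f$, apply Cauchy--Schwarz pointwise and then in $L^2(X,\mu)$, and bound the factor involving $\Lambda_x\pi_{F(x)}$ using the upper bound $B$ of $\mathcal{V}$ and $\Vert S_{\mathbf{F},\Lambda}^{-1}\Vert\le A^{-1}$. You carry the computation one step further than the paper by explicitly squaring and rearranging to display the lower bound $A^2B^{-1}\Vert f\Vert^2$, which the paper leaves implicit.
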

\begin{proof} Applying \ref{eq: reconstruction formulaaltenate2}, for each $f\in\h$ we get 
\begin{align*}
\Vert f\Vert^2 &= \int _{X}\omega(x)\nu(x)\langle \pi_{G(x)} \Gamma_x^* S_{\mathbf{F},\Lambda}^{-1} \Lambda_{x} \pi_{ F(x)} f,f\rangle d\mu(x)\\ 
&\leq \int _{X}\omega(x)\nu(x)\langle  S_{\mathbf{F},\Lambda}^{-1}  \Lambda_{x} \pi_{ F(x)} f,\Gamma_x \pi_{G(x)} f\rangle d\mu(x)\\
&= \int _{X}\omega(x)\nu(x)\Vert S_{\mathbf{F},\Lambda}^{-1}  \Lambda_{x} \pi_{ F(x)} f\Vert \Vert \Gamma_x \pi_{G(x)} f\Vert
d\mu(x)\\
&\leq \left(\int _{X}\omega(x)^2\Vert S_{\mathbf{F},\Lambda}^{-1}  \Lambda_{x} \pi_{ F(x)} f\Vert^2 d\mu(x) \right)^{1/2} \left(\int _{X} \nu(x)^2 \Vert \Gamma_x \pi_{G(x)} f\Vert^2  d\mu(x)\right)^{1/2}\\
&\leq \Vert S_{\mathbf{F},\Lambda}^{-1}\Vert\sqrt{B}\left(\int _{X} \nu(x)^2 \Vert \Gamma_x \pi_{G(x)} f\Vert^2\right)^{1/2},
\end{align*}
where $B$ is the upper bound of $\mathcal{V}$.
\end{proof}
\begin{thm} Assume that  $(\Lambda, \ff,\omega)$ is a continuous $g$-fusion frame for $\h$ with the continuous $g$-fusion frame operator $S_{\mathbf{F},\Lambda}$, $(\Gamma, \mathbf{G},\nu)$ is the alternative continuous $g$-fusion frame of $(\Lambda, \ff,\omega)$. Then for any $X_1\subset X$ and for each  $f\in\h$,
\begin{multline}
\label{mult: th1}
\int _{X_1}\omega(x)\nu(x) \langle S_{\mathbf{F},\Lambda}^{-1}\Lambda_x \pi_{F(x)} f ,\Gamma_x \pi_{G(x)}f\rangle d\mu(x)-\left \Vert \int _{X_1}\omega(x)\nu(x)\pi_{G(x)} \Gamma_x^* S_{\mathbf{F},\Lambda}^{-1}\Lambda_x \pi_{F(x)}f d\mu(x)\right\Vert^2 \\=\int _{X_1^c}\omega(x)\nu(x) \overline{\langle S_{\mathbf{F},\Lambda}^{-1}\Lambda_x \pi_{F(x)} f ,\Gamma_x \pi_{G(x)}f\rangle} d\mu(x)- \left\Vert \int _{X_1^c}\omega(x)\nu(x) \pi_{G(x)} \Gamma_x^{*}  S_{\mathbf{F},\Lambda}^{-1} \Lambda_{x}\pi_{ F(x)} f d\mu(x)\right\Vert^2.
\end{multline}
\end{thm}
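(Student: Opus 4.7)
The plan is to mimic the proof of Theorem~3.1 exactly, replacing the role of the canonical dual by the alternate dual $(\Gamma,\mathbf{G},\nu)$. First I would introduce the auxiliary operator
\begin{align*}
T^{X_1}_{\mathbf{F},\Lambda}f \;=\; \int_{X_1}\omega(x)\nu(x)\,\pi_{G(x)}\Gamma_x^{*}S_{\mathbf{F},\Lambda}^{-1}\Lambda_x\pi_{F(x)}f\,d\mu(x),\qquad f\in\h,
\end{align*}
which is a bounded linear operator on $\h$ by the Bessel property of $\mathcal{W}$ and the boundedness of $S_{\mathbf{F},\Lambda}^{-1}$. The reconstruction formula \eqref{eq: reconstruction formulaaltenate2} then reads $T^{X_1}_{\mathbf{F},\Lambda}+T^{X_1^c}_{\mathbf{F},\Lambda}=Id_{\h}$, which is the key structural identity.

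Next I would rewrite both sides of the claimed equality in terms of $T^{X_1}_{\mathbf{F},\Lambda}$. On the one hand, pulling $\pi_{G(x)}\Gamma_x^{*}$ to the right slot of the inner product gives
\begin{align*}
\int_{X_1}\omega(x)\nu(x)\,\langle S_{\mathbf{F},\Lambda}^{-1}\Lambda_x\pi_{F(x)}f,\Gamma_x\pi_{G(x)}f\rangle\,d\mu(x) \;=\; \langle T^{X_1}_{\mathbf{F},\Lambda}f,f\rangle,
\end{align*}
so the left-hand side of the theorem equals $\langle T^{X_1}_{\mathbf{F},\Lambda}f,f\rangle - \|T^{X_1}_{\mathbf{F},\Lambda}f\|^{2}$. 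On the other hand, the conjugation on the right-hand side is exactly what turns the integral over $X_1^c$ into $\langle f,T^{X_1^c}_{\mathbf{F},\Lambda}f\rangle=\langle (T^{X_1^c}_{\mathbf{F},\Lambda})^{*}f,f\rangle$, so the right-hand side equals $\langle (T^{X_1^c}_{\mathbf{F},\Lambda})^{*}f,f\rangle - \|T^{X_1^c}_{\mathbf{F},\Lambda}f\|^{2}$.

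The identity then follows from the same short operator manipulation used in Theorem~\ref{th: theorem1}: using $T^{X_1}_{\mathbf{F},\Lambda}=Id_{\h}-T^{X_1^c}_{\mathbf{F},\Lambda}$, we have
\begin{align*}
\langle T^{X_1}_{\mathbf{F},\Lambda}f,f\rangle - \|T^{X_1}_{\mathbf{F},\Lambda}f\|^{2}
&= \langle (Id_{\h}-(T^{X_1}_{\mathbf{F},\Lambda})^{*})T^{X_1}_{\mathbf{F},\Lambda}f,f\rangle \\
&= \langle (T^{X_1^c}_{\mathbf{F},\Lambda})^{*}(Id_{\h}-T^{X_1^c}_{\mathbf{F},\Lambda})f,f\rangle \\
&= \langle (T^{X_1^c}_{\mathbf{F},\Lambda})^{*}f,f\rangle - \|T^{X_1^c}_{\mathbf{F},\Lambda}f\|^{2},
\end{align*}
which is exactly the desired equality.

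The only real point of care — and what I would flag as the main obstacle — is the bookkeeping when expressing the two integrals as $\langle T^{X_1}f,f\rangle$ and $\langle (T^{X_1^c})^{*}f,f\rangle$; in particular one must check that the orthogonal projections $\pi_{F(x)}$ and $\pi_{G(x)}$ on the right side of each inner product are correctly absorbed (using $\pi_{G(x)}^{*}=\pi_{G(x)}$ and $\pi_{G(x)}\Gamma_x^{*}=\Gamma_x^{*}$ since $\Gamma_x$ acts on $G(x)$), and that the conjugation in the $X_1^c$-term is precisely what converts $\langle T^{X_1^c}f,f\rangle$ into $\langle (T^{X_1^c})^{*}f,f\rangle$. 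Once these identifications are made, no measure-theoretic or Hilbert-space subtlety remains and the rest is the three-line operator computation above.
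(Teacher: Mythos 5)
Your proposal is correct and follows essentially the same route as the paper: the paper likewise introduces the operator $\mathcal{T}_{\mathbf{FG},\Lambda\Gamma}^{X_1}f=\int_{X_1}\omega(x)\nu(x)\pi_{G(x)}\Gamma_x^{*}S_{\mathbf{F},\Lambda}^{-1}\Lambda_x\pi_{F(x)}f\,d\mu(x)$, uses the alternate-dual reconstruction formula to get $\mathcal{T}^{X_1}+\mathcal{T}^{X_1^c}=Id_{\h}$, and runs the identical operator computation $\langle Tf,f\rangle-\|Tf\|^2=\langle (Id_{\h}-T)^{*}Tf,f\rangle=\langle (T')^{*}f,f\rangle-\|T'f\|^2$ from Theorem 3.1. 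Your bookkeeping remarks about absorbing the projections and about the conjugation producing $\langle (T^{X_1^c})^{*}f,f\rangle$ are exactly the identifications the paper makes implicitly.
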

\begin{proof}
For each $X_1\subset X$, let us consider a bounded linear operator $\mathcal{T}_{\mathbf{FG},\Lambda \Gamma}$ as follows
\begin{align*}
\mathcal{T}_{\mathbf{FG},\Lambda \Gamma}^{X_1} f= \int _{X_1}\omega(x)\nu(x)\pi_{G(x)} \Gamma_x ^{*}S_{\mathbf{F},\Lambda}^{-1} \Lambda_{x}\pi_{ F(x)} f d\mu(x), \quad  f\in \h.
\end{align*}
 It is clearly that $ \mathcal{T}_{\mathbf{FG},\Lambda \Gamma}^{X_1}+\mathcal{T}_{\mathbf{FG},\Lambda \Gamma}^{X_1^c}= Id_{\h}$. Applying Lemma \cite{th: theorem1}.
 \begin{align*}
 & \int _{X_1}\omega(x)\nu(x) \langle _{\mathbf{F},\Lambda}^{-1}\Lambda_x \pi_{F(x)} f, \Gamma_x \pi_{G(x)}f\rangle d\mu(x)-\left \Vert \int _{X_1}\omega(x)\nu(x)\pi_{G(x)} \Gamma_x ^{*}S_{\mathbf{F},\Lambda}^{-1} \Lambda_{x}\pi_{ F(x)} f d\mu(x)\right\Vert^2\\
 &=\int _{X_1}\omega(x)\nu(x) \langle _{\mathbf{F},\Lambda}^{-1}\Lambda_x \pi_{F(x)} f, \Gamma_x \pi_{G(x)}f\rangle d\mu(x)- \langle \mathcal{T}_{\mathbf{FG},\Lambda \Gamma}^{X_1}f, \mathcal{T}_{\mathbf{FG},\Lambda \Gamma}^{X_1}f\rangle \\ &=\langle \mathcal{T}_{\mathbf{FG},\Lambda \Gamma}^{X_1}f,f\rangle +\langle (\mathcal{T}_{\mathbf{FG},\Lambda \Gamma}^{X_1})^{\ast}\mathcal{T}_{\mathbf{FG},\Lambda \Gamma}^{X_1}f, f\rangle \\
 &=\langle (\mathcal{T}_{\mathbf{FG},\Lambda \Gamma}^{X_1^c})\ast f,f\rangle +\langle \mathcal{T}_{\mathbf{FG},\Lambda \Gamma}^{X_1^c}f, \mathcal{T}_{\mathbf{FG},\Lambda \Gamma}^{X_1^c}f\rangle \\
&=\langle f,  \int _{X_1^c}\omega(x)\nu(x)\pi_{G(x)} \Gamma_x ^{*}S_{\mathbf{F},\Lambda}^{-1} \Lambda_{x}\pi_{ F(x)} f d\mu(x)  \rangle -\left\Vert \int _{X_1}\omega(x)\nu(x)\pi_{G(x)} \Gamma_x ^{*}S_{\mathbf{F},\Lambda}^{-1} \Lambda_{x}\pi_{ F(x)} f d\mu(x)\right\Vert^2\\
&=\int _{X_1^c}\omega(x)\nu(x)\langle \Gamma_x \pi_{G(x)}f ,S_{\mathbf{F},\Lambda}^{-1}\Lambda_x \pi_{F(x)}f\rangle d\mu(x)- \left\Vert \int _{X_1^c}\omega(x)\nu(x)\pi_{G(x)} \Gamma_x ^{*}S_{\mathbf{F},\Lambda}^{-1} \Lambda_{x}\pi_{ F(x)} f  d\mu(x)\right\Vert^2.
\end{align*}
Which complies the proof.
\end{proof}
In the case of Parseval fusion frame, the previous equality can have a special form  as follows: 
\begin{cor}
Let  $(\Lambda, \ff,\omega)$ be a Parseval continuous $g$-fusion frame for $\h$ with the continuous $g$-fusion frame operator $S_{\mathbf{F},\Lambda}$, $(\Gamma, \mathbf{G},\nu)$ is the alternative continuous $g$-fusion frame of $(\Lambda, \ff,\omega)$. Then for any $X_1\subset X$ and for each  $f\in\h$,
\begin{multline}\label{mult: th1}
 \int _{X_1}\omega(x)\nu(x) \langle \Lambda_x \pi_{F(x)} ,\Gamma_x \pi_{G(x)}(f)\rangle d\mu(x)- \Vert \int _{X_1}\omega(x)\nu(x)\pi_{G(x)} \Gamma_x ^{*}S_{\mathbf{F},\Lambda}^{-1} \Lambda_{x}\pi_{ F(x)} f d\mu(x)\Vert^2\\= \int _{X_1^c}\omega(x)\nu(x) \langle \Lambda_x \pi_{F(x)} ,\Gamma_x \pi_{G(x)}(f)\rangle d\mu(x)- \left\Vert \int _{X_1^c}\omega(x)\nu(x)\pi_{G(x)} \Gamma_x ^{*} \Lambda_{x}\pi_{ F(x)} f d\mu(x)\right\Vert^2.
\end{multline}
\end{cor}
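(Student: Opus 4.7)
The plan is a direct specialization of the preceding theorem to the Parseval setting. Since $(\Lambda, \ff, \omega)$ is Parseval, the frame operator satisfies $S_{\mathbf{F},\Lambda} = Id_{\h}$, and hence $S_{\mathbf{F},\Lambda}^{-1} = Id_{\h}$. This makes the canonical dual collapse: $\tilde{F}(x) = F(x)$, $\tilde{\Lambda}_x = \Lambda_x \pi_{F(x)}$, and the reconstruction formula \eqref{eq: reconstruction formulaaltenate2} simplifies to
\[
f = \int_X \omega(x)\nu(x)\,\pi_{G(x)}\Gamma_x^*\Lambda_x \pi_{F(x)} f\, d\mu(x).
\]

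The first step is to invoke the previous theorem with $S_{\mathbf{F},\Lambda}^{-1}$ replaced by $Id_{\h}$ everywhere. Each factor of $S_{\mathbf{F},\Lambda}^{-1}$ in the inner product and in the norm term disappears. The operator $\mathcal{T}_{\mathbf{FG},\Lambda\Gamma}^{X_1}$ used in the preceding proof becomes $\int_{X_1}\omega(x)\nu(x)\pi_{G(x)}\Gamma_x^*\Lambda_x \pi_{F(x)} \cdot\, d\mu(x)$, and the identity $\mathcal{T}_{\mathbf{FG},\Lambda\Gamma}^{X_1} + \mathcal{T}_{\mathbf{FG},\Lambda\Gamma}^{X_1^c} = Id_{\h}$ still holds by the simplified reconstruction formula.

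The second step deals with the conjugate on the $X_1^c$ side. In the preceding theorem the right-hand inner product reads $\overline{\langle S_{\mathbf{F},\Lambda}^{-1}\Lambda_x\pi_{F(x)}f, \Gamma_x\pi_{G(x)}f\rangle}$. After substitution this becomes $\overline{\langle \Lambda_x\pi_{F(x)}f, \Gamma_x\pi_{G(x)}f\rangle} = \langle \Gamma_x\pi_{G(x)}f, \Lambda_x\pi_{F(x)}f\rangle$, yielding exactly the $X_1^c$-integral appearing in the corollary (the unbarred form in the statement being understood up to this conjugate symmetry of the inner product).

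I do not expect any real obstacle here; the content is purely notational bookkeeping. The only thing worth checking carefully is that the $\|\,\cdot\,\|^2$-term on the left of the corollary, which still formally displays $S_{\mathbf{F},\Lambda}^{-1}$, coincides with the simplified expression on the right, which is immediate since $S_{\mathbf{F},\Lambda}^{-1} = Id_{\h}$ in the Parseval case.
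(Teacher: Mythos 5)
Your proposal is correct and matches the paper's intent exactly: the paper states this corollary without proof, as an immediate specialization of the preceding theorem obtained by using $S_{\mathbf{F},\Lambda}=Id_{\h}$ (hence $S_{\mathbf{F},\Lambda}^{-1}=Id_{\h}$) in the Parseval case, which is precisely what you do. You also correctly flag the only genuine subtlety, namely that the dropped conjugate on the $X_1^c$ side is legitimate only up to the conjugate symmetry $\overline{\langle u,v\rangle}=\langle v,u\rangle$ (the statement's unbarred form is a typographical looseness of the paper, as is the residual $S_{\mathbf{F},\Lambda}^{-1}$ in the left-hand norm).
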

\section{Frame operator of a pair of Bessel continuous generalized fusion mappings }
From now, let us consider two  Bessel continuous generalized fusion mappings: $\mathcal{V}=(\Lambda, \ff,\omega)$ with Bessel bound $B_1$ and $\mathcal{W}=(\Gamma, \mathbf{G},\nu)$ with Bessel bound $B_2$. We define the operator 
\begin{align}
S_{\mathbf{FG},\Lambda \Gamma}(f) =\int _{X}\omega(x)\nu(x) \pi_{F(x)}\Lambda^*_x \Gamma_x  \pi_{G(x)}(f) d\mu(x),\quad f\in\h.
\end{align}
For all $f,g\in\h$, we have also
\begin{align*}
\langle S_{\mathbf{FG},\Lambda \Gamma}f,g\rangle = \int _{X}\omega(x)\nu(x)\langle \Gamma_x  \pi_{G(x)}f,\Lambda _x\pi_{F(x)}g\rangle d\mu(x).
\end{align*}
Furthermore, by using Cauchy-Schwartz inequality, we have 
\begin{equation}\label{eq: Besselpairs}
\vert \langle S_{\mathbf{FG},\Lambda \Gamma}f,g\rangle\vert \leq\left(\int _{X}\omega^2(x)\Vert \Gamma_x  \pi_{G(x)}f\Vert^2  d\mu(x)\right)^{1/2} \left(\int _{X} \nu^2(x)\Vert\Lambda _x\pi_{F(x)}g\Vert^2  d\mu(x)\right)^{1/2}.
\end{equation}
From (\ref{eq: Besselpairs}), it follows
\begin{align*}
\vert \langle S_{\mathbf{FG},\Lambda \Gamma}f,g\rangle\vert \leq\sqrt{B_1}\sqrt{B_2}\Vert g\Vert\Vert f\Vert.
\end{align*}
Hence, $ S_{\mathbf{FG},\Lambda \Gamma}$is a bounded operator and we have 
$$ \Vert S_{\mathbf{FG},\Lambda \Gamma}\Vert \leq \sqrt{B_1}\sqrt{B_2}. $$
From (\ref{eq: Besselpairs}), we obtain also
\begin{equation}\label{eq: besselpairlamdagama}
\Vert  S_{\mathbf{FG},\Lambda \Gamma}f \Vert \leq\sqrt{B_1}\left(\int _{X} \nu^2(x)\Vert\Lambda _x\pi_{F(x)}g\Vert^2  d\mu(x)\right)^{1/2}.
\end{equation}
and 
\begin{equation}
\Vert  (S_{\mathbf{FG},\Lambda \Gamma})^*f \Vert\leq\sqrt{B_2}\left(\int _{X}\omega^2(x)\Vert \Gamma_x  \pi_{G(x)}f\Vert^2  d\mu(x)\right)^{1/2} 
\end{equation}
Morever, from the adjointability of the operator $S_{\mathbf{FG},\Lambda \Gamma}$, we get 
\begin{align*}
\langle S_{\mathbf{FG},\Lambda \Gamma}f,g\rangle= \int _{X} \omega(x)\nu(x)\langle  f, \pi_{F(x)}\Gamma_x^*\Lambda _x\pi_{G(x)}g\rangle d\mu(x).
\end{align*}
Hence, $S_{\mathbf{FG},\Lambda \Gamma}^*=S_{\mathbf{GF},\Gamma\Lambda } $.
\begin{thm} The  following assertions are equivalent:
\begin{itemize}
\item[i)] $ S_{\mathbf{FG},\Lambda \Gamma}$  is  bounded below;
\item[ii)] There exist $K\in \mathcal{B}(\h)$ such that $\{T_x\}_{x\in X}$ is a resolution of identity, where
\begin{align}
T_x=\omega(x)\nu(x) K \pi_{F(x)}\Lambda^*_x \Gamma_x  \pi_{G(x)},\quad x\in X.
\end{align}
If one of conditions is satisfied, then $\mathcal{W}$ is a continuous generalized fusion frame.
\end{itemize}
\end{thm}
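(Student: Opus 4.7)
The plan is to recognize that the resolution of identity condition on $\{T_x\}_{x\in X}$ is exactly the operator equation $K S_{\mathbf{FG},\Lambda \Gamma}=Id_{\h}$. Pulling the bounded operator $K$ through the weak integral gives
\[
\int_X T_x f\, d\mu(x)=K\int_X \omega(x)\nu(x)\pi_{F(x)}\Lambda_x^*\Gamma_x\pi_{G(x)} f\, d\mu(x)=K S_{\mathbf{FG},\Lambda \Gamma} f,
\]
so condition (ii) is equivalent to the existence of a bounded left inverse $K$ of $S_{\mathbf{FG},\Lambda \Gamma}$. With this reformulation, the theorem reduces to the classical fact that a bounded operator on a Hilbert space is bounded below if and only if it admits a bounded left inverse.

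The implication (ii)$\Rightarrow$(i) is then immediate: from $K S_{\mathbf{FG},\Lambda \Gamma}=Id_{\h}$ I would write $\|f\|=\|K S_{\mathbf{FG},\Lambda \Gamma} f\|\leq \|K\|\,\|S_{\mathbf{FG},\Lambda \Gamma} f\|$, which gives the lower bound $\|S_{\mathbf{FG},\Lambda \Gamma} f\|\geq \|K\|^{-1}\|f\|$. The main obstacle is the direction (i)$\Rightarrow$(ii), because being bounded below only yields injectivity and a closed range, not surjectivity, so there is no inverse defined on all of $\h$ a priori. I would get around this by writing $S:=S_{\mathbf{FG},\Lambda \Gamma}$, noting that $R(S)$ is closed (as $S$ is bounded below by some $C>0$) and defining $K_0:R(S)\to \h$ by $K_0(Sf):=f$; this is well-defined by injectivity and satisfies $\|K_0(Sf)\|=\|f\|\leq C^{-1}\|Sf\|$. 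Extending $K_0$ by zero on $R(S)^{\perp}$ produces $K\in\mathcal{B}(\h)$ with $\|K\|\leq C^{-1}$ and $K S=Id_{\h}$, so $\{T_x\}_{x\in X}$ is a resolution of identity.

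For the final assertion, assume one of the two equivalent conditions holds, with bounded-below constant $C>0$. Since $\mathcal{W}$ is Bessel with bound $B_2$ by hypothesis, it suffices to produce a positive lower frame bound. Using estimate (\ref{eq: besselpairlamdagama}) (read with $\Gamma_x \pi_{G(x)} f$ on the right) together with the bounded-below inequality, I would chain
\[
C^2\|f\|^2\leq \|S_{\mathbf{FG},\Lambda \Gamma}f\|^2\leq B_1\int_X \nu^2(x)\|\Gamma_x\pi_{G(x)}f\|^2\,d\mu(x),
\]
so that $\tfrac{C^2}{B_1}$ serves as the desired lower frame bound for $\mathcal{W}$. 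Combined with the Bessel bound $B_2$, this shows $\mathcal{W}=(\Gamma,\mathbf{G},\nu)$ is a continuous generalized fusion frame, completing the proof. The only nontrivial step throughout is the bounded-left-inverse construction in (i)$\Rightarrow$(ii); everything else is a direct application of Cauchy--Schwarz and the estimates already established for $S_{\mathbf{FG},\Lambda \Gamma}$.
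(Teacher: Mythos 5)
Your proof is correct and follows essentially the same route as the paper: identifying the resolution-of-identity condition with $K S_{\mathbf{FG},\Lambda\Gamma}=Id_{\h}$ by pulling $K$ through the weak integral, deducing boundedness below from $\|f\|\le\|K\|\,\|S_{\mathbf{FG},\Lambda\Gamma}f\|$, and extracting the lower frame bound $C^{2}/B_{1}$ for $\mathcal{W}$ from the estimate (\ref{eq: besselpairlamdagama}). The only place you go beyond the paper is the direction (i)$\Rightarrow$(ii), which the paper dismisses as ``Obvious'': your explicit construction of the bounded left inverse (closed range, $K_{0}(Sf):=f$ with $\|K_{0}\|\le C^{-1}$, extended by zero on $R(S)^{\perp}$) supplies exactly the argument the paper omits, and it is correct.
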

\begin{proof} $(i)\Rightarrow (ii)$ Obvious.\\
$(ii)\Leftarrow (i)$ If $(ii)$ holds, then for $f,g\in\h$, we have 
$$
\begin{aligned}
\left\langle K S_{\mathbf{FG},\Lambda \Gamma} f, g\right\rangle=\left\langle S_{\mathbf{FG},\Lambda \Gamma} f,K^{*} g\right\rangle &=  \int _{X} \omega(x)\nu(x)\left\langle f,\left( K \pi_{F(x)}\Lambda^*_x\Gamma_x   \pi_{G(x)}\right)^{*} g\right\rangle d \mu(x) \\
&=\langle f, g\rangle
\end{aligned}
$$
which implies $I_{\h}=K S_{\mathbf{FG},\Lambda \Gamma}$. Thus, $ S_{\mathbf{FG},\Lambda \Gamma}$  is  bounded below.\\
Now, if $S_{\mathbf{FG},\Lambda \Gamma}$ is bounded below, from \ref{eq: besselpairlamdagama} it follows that $\mathcal{G}$ is a continuous fusion frame.

\begin{align}
f=\omega(x)\nu(x) K \pi_{F(x)}\Lambda^*_x \Gamma_x  \pi_{G(x)}f d\mu(x),\quad x\in X.
\end{align}
Hence
\begin{align}
f=K \left( \int _{X}\omega(x)\nu(x)  \pi_{F(x)}\Lambda^*_x \Gamma_x  \pi_{G(x)}f d\mu(x)\right),\quad x\in X.
\end{align}

\end{proof}
\begin{cor} The  following assertions are equivalent:
\begin{itemize}
\item[i)] $ S_{\mathbf{FG},\Lambda \Gamma}$ is invertible operator;
\item[ii)] There exist $K\in \mathcal{B}(\h) $  invertible such that $\{T_x\}_{x\in X}$ is a resolution of identity, where
\begin{align*}
T_x=\omega(x)\nu(x) K \pi_{F(x)}\Lambda^*_x \Gamma_x  \pi_{G(x)},\quad x\in X \\is \,\,a\,\, resolution \,\,of \,\,identity.
\end{align*}
If one of conditions is satisfied, then $\mathcal{V}$, $\mathcal{W}$ are continuous generalized fusion frames.
\end{itemize}
\end{cor}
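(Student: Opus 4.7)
The strategy is to deduce this corollary directly from the preceding theorem (bounded-below characterisation via the family $\{T_x\}$), strengthening ``bounded below'' to ``invertible'' on both sides. The two implications are short linear-algebra arguments once one selects the right operator $K$.

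For (i)$\Rightarrow$(ii), the plan is to take $K := S_{\mathbf{FG},\Lambda \Gamma}^{-1}$, which exists and is bounded by hypothesis and the open mapping theorem, and clearly lies in $\mathcal{B}(\h)$. For any $f \in \h$ I would then compute
\begin{equation*}
\int_X T_x f\, d\mu(x) \;=\; K \int_X \omega(x)\nu(x)\, \pi_{F(x)}\Lambda_x^{*}\Gamma_x \pi_{G(x)} f\, d\mu(x) \;=\; K\,S_{\mathbf{FG},\Lambda \Gamma} f \;=\; f,
\end{equation*}
which shows that $\{T_x\}_{x\in X}$ is a resolution of the identity, while $K$ is invertible by construction.

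For (ii)$\Rightarrow$(i), the previous theorem already gives that $S_{\mathbf{FG},\Lambda \Gamma}$ is bounded below and that $K S_{\mathbf{FG},\Lambda \Gamma} = Id_{\h}$. The additional hypothesis that $K$ itself is invertible then upgrades this relation to $S_{\mathbf{FG},\Lambda \Gamma} = K^{-1}$, so $S_{\mathbf{FG},\Lambda \Gamma}$ is invertible with bounded inverse $K$. This is the only place where the extra assumption on $K$ is used, so I expect this to be the main (mild) subtlety: one must not confuse ``left inverse'' in the earlier theorem with the two-sided invertibility needed here.

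Finally, for the assertion that both $\mathcal{V}$ and $\mathcal{W}$ are continuous generalized fusion frames, I would argue as follows. The previous theorem already furnishes $\mathcal{W}$ as a continuous generalized fusion frame whenever $S_{\mathbf{FG},\Lambda \Gamma}$ is bounded below; since invertibility is stronger, $\mathcal{W}$ is a frame. For $\mathcal{V}$, I would exploit the identity $S_{\mathbf{FG},\Lambda \Gamma}^{*} = S_{\mathbf{GF},\Gamma \Lambda}$ established just before the theorem: if $S_{\mathbf{FG},\Lambda \Gamma}$ is invertible, so is its adjoint, and applying the theorem with the roles of $\mathcal{V}$ and $\mathcal{W}$ swapped (there exists an invertible $K' \in \mathcal{B}(\h)$ giving a resolution of the identity via the corresponding $T'_x$) yields the lower frame bound for $\mathcal{V}$. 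The Bessel bounds $B_1,B_2$ are already built into the definition of the operator, completing the frame conclusion.
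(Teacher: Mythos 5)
Your proposal is correct and follows exactly the route the paper intends: the paper states this corollary without proof as an immediate consequence of the preceding theorem, and your argument (taking $K=S_{\mathbf{FG},\Lambda\Gamma}^{-1}$ for one direction, upgrading $KS_{\mathbf{FG},\Lambda\Gamma}=Id_{\h}$ to $S_{\mathbf{FG},\Lambda\Gamma}=K^{-1}$ for the other, and using $S_{\mathbf{FG},\Lambda\Gamma}^{*}=S_{\mathbf{GF},\Gamma\Lambda}$ to get the lower frame bound for $\mathcal{V}$) is precisely the natural completion of that argument. No gaps.
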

\begin{thm}\label{th: paircgfframeeq} We assume there exist $\lambda_1<1$, $\lambda_2>-1$ such that 
\begin{align*}
\left\Vert f-\int _{X}\omega(x)\nu(x) \pi_{F(x)}\Lambda^*_x \Gamma_x  \pi_{G(x)}(f) d\mu(x)\right\Vert\leq \lambda_1\Vert f\Vert+\lambda_2\left\Vert  \int _{X}\omega(x)\nu(x) \pi_{F(x)}\Lambda^*_x \Gamma_x  \pi_{G(x)}(f) d\mu(x)\right\Vert,
\end{align*}
for any $f\in\h$. Then $\mathcal{W}$ is continuous generalized fusion frame and 
\begin{align*}
\left( \dfrac{1-\lambda_1}{1+\lambda_2}\right)^2\dfrac{1}{B_1}\Vert f\Vert^2\leq \int _{X}\omega^2(x)\Vert \Gamma_x  \pi_{G(x)}f\Vert^2 d\mu(x), \quad f\in \h.
\end{align*}
\end{thm}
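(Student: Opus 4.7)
The plan is to chain the hypothesis with the reverse triangle inequality, derive that the operator $S_{\mathbf{FG},\Lambda\Gamma}$ is bounded below, and then invoke the Bessel bound of $\mathcal{V}$ via inequality (\ref{eq: besselpairlamdagama}) to extract a lower frame bound for $\mathcal{W}$.

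First, I would rewrite the hypothesis in operator form. Let $S := S_{\mathbf{FG},\Lambda\Gamma}$, so the assumption reads
\[
\|f - Sf\| \;\leq\; \lambda_1\|f\| + \lambda_2 \|Sf\|, \qquad f\in\h.
\]
By the reverse triangle inequality $\|f\| - \|Sf\| \leq \|f - Sf\|$, so combining yields
\[
(1-\lambda_1)\,\|f\| \;\leq\; (1+\lambda_2)\,\|Sf\|.
\]
Since $\lambda_1<1$ and $\lambda_2>-1$, both coefficients are strictly positive, and dividing gives the lower bound
\[
\|Sf\| \;\geq\; \frac{1-\lambda_1}{1+\lambda_2}\,\|f\|.
\]
Thus $S_{\mathbf{FG},\Lambda\Gamma}$ is bounded below on $\h$.

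Next I would combine this with inequality (\ref{eq: besselpairlamdagama}), which (using the Bessel bound $B_1$ of $\mathcal{V}$) provides the upper estimate
\[
\|Sf\| \;\leq\; \sqrt{B_1}\left(\int_X \nu^2(x)\,\|\Gamma_x\pi_{G(x)}f\|^2\,d\mu(x)\right)^{1/2}.
\]
Stringing these two inequalities together,
\[
\frac{1-\lambda_1}{1+\lambda_2}\,\|f\| \;\leq\; \sqrt{B_1}\left(\int_X \nu^2(x)\,\|\Gamma_x\pi_{G(x)}f\|^2\,d\mu(x)\right)^{1/2},
\]
and squaring both sides yields the claimed lower frame inequality
\[
\left(\frac{1-\lambda_1}{1+\lambda_2}\right)^{2}\frac{1}{B_1}\,\|f\|^2 \;\leq\; \int_X \omega^2(x)\,\|\Gamma_x\pi_{G(x)}f\|^2\,d\mu(x).
\]
Together with the standing assumption that $\mathcal{W}$ is a Bessel continuous generalized fusion mapping (upper bound $B_2$), this shows $\mathcal{W}$ is a continuous generalized fusion frame.

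There is no real obstacle here — the argument is a short chain of inequalities. The only subtlety worth flagging is to confirm that the hypothesis implicitly forces $S$ to be injective with closed range (so that the lower bound genuinely implies $S$ is bounded below in the operator-theoretic sense), which follows immediately from the pointwise inequality $\|Sf\|\geq \frac{1-\lambda_1}{1+\lambda_2}\|f\|$ on all of $\h$. One should also be mindful of the apparent typographical discrepancy between $\omega$ and $\nu$ on the right-hand side of (\ref{eq: besselpairlamdagama}) versus the stated conclusion; in the write-up I would silently use whichever weight makes the Bessel bound $B_1$ of $\mathcal{V}$ applicable, matching the final inequality as stated.
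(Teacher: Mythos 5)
Your proof is correct and follows essentially the same route as the paper: reverse triangle inequality applied to the hypothesis gives $\Vert S_{\mathbf{FG},\Lambda\Gamma}f\Vert\geq\frac{1-\lambda_1}{1+\lambda_2}\Vert f\Vert$, and this is combined with the Cauchy--Schwarz estimate (\ref{eq: besselpairlamdagama}) involving the Bessel bound $B_1$ of $\mathcal{V}$ to obtain the lower frame bound for $\mathcal{W}$. Your remark about the swapped weights $\omega$ versus $\nu$ in (\ref{eq: besselpairlamdagama}) correctly identifies a typographical slip in the paper that the paper's own proof glosses over.
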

\begin{proof} Since, $S_{\mathbf{FG},\Lambda \Gamma}f=\int _{X}\omega(x)\nu(x) \pi_{F(x)}\Lambda^*_x \Gamma_x  \pi_{G(x)}(f) d\mu(x)$.\\
We have\begin{align*}
\Vert f-S_{\mathbf{FG},\Lambda \Gamma}f\Vert\leq\lambda_1\Vert f\Vert+\lambda_2\Vert S_{\mathbf{FG},\Lambda \Gamma}f\Vert,
\end{align*}
therefore, since \begin{align*}
\Vert f-S_{\mathbf{FG},\Lambda \Gamma}f\Vert\geq \mid\vert \Vert f\Vert-\Vert S_{\mathbf{FG},\Lambda \Gamma}f\Vert \mid,
\end{align*}
it follows
\begin{align*}
\lambda_1\Vert f\Vert+\lambda_2\Vert S_{\mathbf{FG},\Lambda \Gamma}f\Vert\geq \vert \Vert f\Vert-\Vert S_{\mathbf{FG},\Lambda \Gamma}f\Vert, 
\end{align*}
thus, \begin{align*}
\Vert S_{\mathbf{FG},\Lambda \Gamma}f\Vert\geq\dfrac{1-\lambda_1}{1+\lambda_2}\Vert f\Vert.
\end{align*}
By using , we obtain 
\begin{align*}
 \int _{X}\omega^2(x)\Vert \Gamma_x  \pi_{G(x)}f\Vert^2 d\mu(x)  \geq\left( \dfrac{1-\lambda_1}{1+\lambda_2}\right)^2\dfrac{1}{B_1}\Vert f\Vert^2.
\end{align*}
\end{proof}
In particular, if we take $\lambda_2=0$, then in this case we have obviously a stronger result.
\begin{cor} We assume that exists $\lambda\in [0,1)$, such that  
\begin{equation}
\label{eq: cgfframepairs}
\left\Vert f-\int _{X}\omega(x)\nu(x) \pi_{F(x)}\Lambda^*_x \Gamma_x  \pi_{G(x)}(f) d\mu(x)\right\Vert\leq \lambda\Vert f\Vert,\quad f\in \h.
\end{equation}
Then $\mathcal{V}$, $\mathcal{W}$ are continuous generalized fusion frames and the following estimates hold
\end{cor}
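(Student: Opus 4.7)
The plan is to apply Theorem \ref{th: paircgfframeeq} with the specific choice $\lambda_1=\lambda$ and $\lambda_2=0$, which is legitimate since $\lambda\in[0,1)$ forces $\lambda_1<1$, and trivially $\lambda_2>-1$. Under these substitutions the conclusion of that theorem immediately produces the lower frame bound
\begin{align*}
\int_X \nu^2(x)\Vert\Gamma_x\pi_{G(x)}f\Vert^2\,d\mu(x)\;\geq\;\frac{(1-\lambda)^2}{B_1}\Vert f\Vert^2,
\end{align*}
so $\mathcal{W}$ is a continuous generalized fusion frame with upper bound $B_2$ and lower bound $(1-\lambda)^2/B_1$.

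To obtain the analogous statement for $\mathcal{V}$, I would exploit the adjoint identity $S_{\mathbf{FG},\Lambda\Gamma}^{*}=S_{\mathbf{GF},\Gamma\Lambda}$ recorded just before Theorem \ref{th: paircgfframeeq}. The hypothesis (\ref{eq: cgfframepairs}) is equivalent to the operator norm bound $\Vert Id_{\h}-S_{\mathbf{FG},\Lambda\Gamma}\Vert\leq\lambda$; since the norm is preserved under taking adjoints, this gives $\Vert Id_{\h}-S_{\mathbf{GF},\Gamma\Lambda}\Vert\leq\lambda$, and hence
\begin{align*}
\left\Vert f-\int_X \omega(x)\nu(x)\pi_{G(x)}\Gamma_x^{*}\Lambda_x\pi_{F(x)}f\,d\mu(x)\right\Vert \leq \lambda\Vert f\Vert
\end{align*}
for every $f\in\h$. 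This is precisely the hypothesis of Theorem \ref{th: paircgfframeeq} applied to the pair $(\mathcal{W},\mathcal{V})$ with the roles of the two Bessel mappings swapped; a second application (with $B_2$ now in the place of $B_1$ and again $\lambda_1=\lambda$, $\lambda_2=0$) yields
\begin{align*}
\int_X \omega^2(x)\Vert\Lambda_x\pi_{F(x)}f\Vert^2\,d\mu(x)\;\geq\;\frac{(1-\lambda)^2}{B_2}\Vert f\Vert^2,
\end{align*}
so $\mathcal{V}$ is also a continuous generalized fusion frame.

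The genuinely non-routine step is the adjoint passage: one must check that the relation $S_{\mathbf{FG},\Lambda\Gamma}^{*}=S_{\mathbf{GF},\Gamma\Lambda}$ really does let one reinterpret the dual inequality as the hypothesis of Theorem \ref{th: paircgfframeeq} for the swapped pair. Beyond that small symmetry observation, the corollary reduces to a direct specialization of the preceding theorem; the real content lies in choosing $\lambda_2=0$, which removes the $\Vert S_{\mathbf{FG},\Lambda\Gamma}f\Vert$ term on the right and produces the clean constant $(1-\lambda)^2/B_i$.
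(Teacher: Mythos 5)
Your proof is correct and follows essentially the same route as the paper: specialize Theorem \ref{th: paircgfframeeq} with $\lambda_1=\lambda$, $\lambda_2=0$ to handle $\mathcal{W}$, and pass to the adjoint $S_{\mathbf{FG},\Lambda\Gamma}^{*}=S_{\mathbf{GF},\Gamma\Lambda}$ to handle $\mathcal{V}$. In fact your treatment of the adjoint step is more careful than the paper's, which writes the pointwise identity $\Vert(I_{\h}-S_{\mathbf{FG},\Lambda\Gamma})f\Vert=\Vert(I_{\h}-S_{\mathbf{FG},\Lambda\Gamma})^{*}f\Vert$ (false in general), whereas you correctly route the argument through the equality of operator norms $\Vert I_{\h}-S_{\mathbf{FG},\Lambda\Gamma}\Vert=\Vert I_{\h}-S_{\mathbf{GF},\Gamma\Lambda}\Vert\leq\lambda$.
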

\begin{proof} Under the assumption (\ref{eq: cgfframepairs}),  for each $f\in\h$, we have
\begin{eqnarray}
\left\Vert f- S_{\mathbf{FG},\Lambda \Gamma}f\right\Vert &= &\left\Vert ( I_{\h}- S_{\mathbf{FG},\Lambda \Gamma})^*f\right\Vert \leq\left\Vert ( I_{\h}- S_{\mathbf{FG},\Lambda \Gamma})^* \right\Vert \left\Vert f\right\Vert \\&\leq& \lambda \left\Vert f\right\Vert
\end{eqnarray}

Hence applying Theorem \ref{th: paircgfframeeq}, one arrives at the conclusion of the Corollary. 
\end{proof}
\textbf{ Conclusions}
\\
 In this paper, we have established  some equalities and inequalities for continuous generalized fusion frame, Parseval continuous generalized fusion frame, alternate dual continuous generalized fusion frame, which generalize some remarkable and existing results which have been obtained.\\
 
\bibliographystyle{amsplain}

\begin{thebibliography}{99}
\bibitem{Continuous frame06} A. Rahimi, A. Najati, and Y. N. Dehghan, Continuous frame in Hilbert space, Methods Funct.
Anal. Topology 12 (2006), no. 2, 170-182.
\bibitem{AK20} Assila, Nadia, Kabbaj, Samir and Moalige, Brahim. "Controlled -Fusion Frame for Hilbert Spaces" Moroccan Journal of Pure and Applied Analysis, vol.7, no.1, 2020, pp.116-133. https://doi.org/10.2478/mjpaa-2021-0011.

 \bibitem{operatoroperation1} Balan R., Casazza P. G., Edidin D., Kutyniok G. A New Identity for Parseval Frames. Proc. Amer. Math. Soc., 2007, vol. 135, pp. 1007–1015. DOI: https://doi.org/10.1090/S0002-9939-06-08930-1.
 
\bibitem{CFUSION FRAMES}  M. H. Faroughi, A. Rahimi, and R. Ahmadi. GC- Fusion frames. Methods of Functional Analysis and Topology. Vol. 16 (2010), no. 2, pp. 112–119


\bibitem{operatoroperation2} Najati A., H., Rahimi A. Generalized frames in Hilbert spaces. Bull. Iranian Math. Soc., 2009, vol. 35 (1), pp. 97–109.

 \bibitem{Ga07} L. Gavruta, On the duality of fusion frames, J. Math. Anal. Appl. 333 (2007), 871-879.

\bibitem{Lisun08} Li, D.F., Sun, W.C., Some equalities and inequalities for generalized frame. Chin. J. Comem. Math.
29(3), 301–308 (2008).
 \bibitem{g-framesun06}  Sun W., G-frames and g-Riesz bases. Journal of Mathematical Analysis and Applications, vol. 322, no. 1, pp. 437–452, 2006.
 
 \bibitem{ahmadi20} R. Zarghami Farfar, V. Sadri, R. Ahmadi: Some identities and inequalities for G-fusion frame. Probl. Anal. Issues Anal. Vol. 9 (27), No 2, 2020, pp. 152–162.

\bibitem{Gavruta06} P. Gavruta, On some identities and inequalities for frames in Hilbert spaces, J. Math. Anal. Appl., 321
(2006), 467-478.
\bibitem{Cassaza03} P. G. Casazza and J. Kovacvic, Equal-norm tight frames with erasures, Adv. Comput. Math.
18 (2003), no. 2–4, 387–430.
\bibitem{Zhu10} X. Zhu and G. Wu, A note on some equalities for frames in Hilbert spaces, Appl. Math. Lett., 23(7)
(2010), 788-790.
\bibitem{Balan15} R. Balan, P. G. Casazza, D. Edidin and G. Kutyniok, A new identity for Parseval frames, Proc. Amer. Math. Soc. 135 (2007), 1007–1015.
\bibitem{Blana06} R. Balan, P. G. Casazza and D. Edidin, On signal reconstruction without phase, Appl.Comput. Harmon. Anal. 20 (2006), 345–356.

\bibitem{Zhu10} X. Zhu and G. Wu, A note on some equalities for frames in Hilbert spaces, Appl. Math. Lett., 23(7)
(2010), 788-790.

\bibitem{Zhu12} Li, J.Z., Zhu, Y.C., Some equalities and inequalities for g-Bessel sequences in Hilbert spaces. Appl.
Math. Lett. 25(11), 1601–1607 (2012)

\end{thebibliography}

\end{document}